\newcommand\N{\mathbb{N}}
\newcommand\R{\mathbb{R}}
\newcommand\Z{\mathbb{Z}}
\newcommand\Q{\mathbb{Q}}
\newcommand{\forget}[1]{}
\def\Om{{\Omega}}  
\def\om2{{\Om\times\Om}}
\def\M{{\mathcal M}}
\newcommand{\E}{\Sigma}
\def\supp{\mathrm{supp}\,}
\def\Lip{\mathrm{Lip}}
\newcommand{\res}{\llcorner} 
\newcommand{\MM}{\mathbb M}
\newcommand{\weakto}{\rightharpoonup}
\newcommand{\ld}{[\![}
\newcommand{\rd}{]\!]}
\newtheorem{theorem}{Theorem}[section]
\newtheorem{lemma}[theorem]{Lemma}
\newtheorem{proposition}[theorem]{Proposition}
\newtheorem{corollary}[theorem]{Corollary}
\theoremstyle{remark}
\newtheorem{remark}[theorem]{Remark}
\numberwithin{equation}{section}
\title{Structure of metric cycles and normal one-dimensional currents}
\author{Emanuele Paolini}%
\address{Dipartimento di Matematica ``U.~Dini'', Universit\`{a} di
Firenze, viale Morgagni 67/A, 50134 Firenze, Italy.}
\author{Eugene Stepanov}%
\address{
St.Petersburg Branch
of the Steklov Mathematical Institute of the Russian Academy of Sciences,
Fontanka 27,
191023 St.Petersburg,
Russia
\and
Department of Mathematical Physics, Faculty of Mathematics and Mechanics,
St. Petersburg State University, Universitetskij pr.~28, Old Peterhof,
198504 St.Petersburg, Russia
}
\email{stepanov.eugene@gmail.com}
\thanks{The work of the second author was financed by GNAMPA, by RFBR grant \#11-01-00825,
  by the project 2008K7Z249 ``Trasporto ottimo di massa,
disuguaglianze geometriche e funzionali e applicazioni'' of the
Italian Ministry of Research,
 as well as by the project ANR-07-BLAN-0235 OTARIE}
\begin{document}

\begin{abstract}
We prove that every one-dimensional real Am\-bro\-sio-Kirchheim normal current
in a Polish (i.e.\ complete separable metric) space
 can be naturally represented as an integral of
 simpler currents associated to Lipschitz curves. As a consequence a representation of
 every such current with zero boundary (i.e.\ a cycle) as an integral
 of so-called elementary solenoids (which are, very roughly speaking, more or less the same as
 asymptotic cycles introduced by S.~Schwartzman)
 is obtained.
The latter result on cycles is in fact a  generalization of the analogous result
proven by S.~Smirnov for classical Whitney currents in a Euclidean space.
 The same results are true for every complete metric space under suitable set-theoretic assumptions.
\end{abstract}

\maketitle

\section{Introduction}

In~\cite{PaoSte11-acycl} it has been shown that  every acyclic normal one-dimensional real current in a complete metric space
 can be naturally decomposed in curves, the decomposition preserving the mass and the boundary mass.
 Namely, roughly speaking, every such current $T$ can be represented as an integral
  \begin{align*}
T&=\int_{\Theta(E)} \ld\theta\rd \, d\eta(\theta)
\end{align*}
 of simple rectifiable currents $\ld\theta\rd$ associated to injective Lipschitz curves $\theta\colon [0,1]\to E$
 over some measure $\eta$ defined on the latter set of curves $\Theta(E)$,
  the mass of the current $\MM(T)$ being equal to the integral
 of the masses $\MM(\ld\theta\rd)$ (in this particular case equal to lengths $\ell(\theta)$)  of the respective curves,
 \begin{align*}
\MM(T)&=\int_{\Theta(E)} \MM(\ld\theta\rd)\, d\eta(\theta)=\int_{\Theta(E)} \ell(\theta)\, d\eta(\theta),
\end{align*}
with $\eta$-a.e.\ $\theta\in \Theta(E)$ belonging to the support of $T$, and a similar decomposition being valid also
 for boundary masses.
This is a direct generalization to metric currents introduced first by E.~De Giorgi and further
studied by L.~Ambrosio and B.~Kirchheim in~\cite{AmbrKirch00} of the analogous result for Whitney currents in a Euclidean space proven in~\cite{Smirnov94}.

The primary goal of this paper is to prove the analogous decomposition result for all (not only acyclic) real one-dimensional
metric currents. This is accomplished in Corollary~\ref{co_cycl1_repr_ac2a} based on Theorem~\ref{th_cycl1_repr_ac1} which
fills the gap by providing an appropriate decomposition of \emph{cycles}, i.e.\ real one-dimensional
metric currents without boundary. It is curious to mention that the latter theorem is mainly based on the decomposition of acyclic currents.

Once the primary goal is accomplished, it becomes natural to ask whether any cycle can be decomposed as an integral of currents associated to closed curves. Unfortunately, as shown in~\cite{Smirnov94} this is not true even for the Euclidean space, but at least in a Euclidean space
every
one-dimensional real Whitney currents with zero boundary (i.e.\ a cycle)
can be decomposed in so-called elementary \emph{solenoids} (called also solenoidal vector charges in~\cite{Smirnov94}).
Such solenoids, i.e.\ the natural ``elementary'' cycles, are strictly related to the \emph{asymptotic cycles}
introduced by S.~Schwartzman in~\cite{Schwartz57}
and further studied in~\cite{Schwartz97} (in fact, roughly speaking, up to technical details,
and in particular up to the fact that Schwartzman asymptotic cycles are normally
defined as elements of the space of homology classes~\cite{MunMar09sc},
one may identify the two notions). It is worth remarking that these objects appear quite natural
in the problem of representation of homology classes of manifolds
(see~\cite{MunMar09sc,MunMar11a,MunMar11b,MunMar09-II}).
The decomposition of a one-dimensional cycle into such solenoids appeared to be quite helpful
in the study of Mather's minimal measures~\cite{Bang99,DePasGelGran06}.

Here we prove the analogous result for Ambrosio-Kirchheim currents in an arbitrary complete metric space.
Namely, we introduce the notion of a solenoid
as a current $S$ over a metric space $E$ such that there exists
a
Lipschitz curve $\theta\colon \R\to E$ with $\Lip\, \theta \leq 1$
with the property
\[
S =\lim_{t\to +\infty} \frac{1}{2t}\ld \theta \res [-t,t]\rd
\]
in the appropriately weak sense,
while the trace $\theta(\R)$ of the curve $\theta$ is in the support of $S$, i.e.\
$\theta(\R)\subset \supp S$.
We show then that, roughly speaking, for every cycle $T$ with compact support 
there is a measure $\eta$ concentrated over
the set $C$ of solenoids of unit mass such that
\begin{align*}
T&=\int_{C} S\, d\eta(S),\\
\MM(T)&=\int_{C} \MM(S)\, d\eta(S),
\end{align*}
and a similar result holds also for arbitrary cycles (not necessarily with compact support).
The result we provide
for cycles with compact support in an arbitrary metric space (Corollary~\ref{co_decompNorm2_cycl0b})
is the precise generalization of the  result
of~\cite{Smirnov94} on decomposition of cycles in a Euclidean space restricted to cycles with compact support, since
for Ambrosio-Kirchheim normal currents in compact subset of a Euclidean space the notion of mass
coincides with that of the usual Whitney currents.
The careful reader would observe that the result we provide for the general case of currents with possibly noncompact
support (Theorem~\ref{th_decompNorm2_cycl0a}) is ``almost like'' the respective general result in a Euclidean space setting from~\cite{Smirnov94}, the difference standing in the different definitions of mass for metric currents and for Whitney currents in a Euclidean space.

It is curious to note that
although the technique used to prove Theorem~\ref{th_cycl1_repr_ac1} which is the basis for all the results present in this
paper resembles the basic idea of~\cite{Smirnov94} of extending the space $E$ by an ``extra dimension'' and considering the appropriate extension of the original current $T$, the main line of the proof is in  a certain sense opposite to that used
in~\cite{Smirnov94}. Namely, here we use the representation result for acyclic currents from~\cite{PaoSte11-acycl}
as a starting point, while
in~\cite{Smirnov94} one does the contrary, i.e.\ first proves the decomposition result for cycles and then deduces the
respective results for acyclic currents from the latter. Therefore, since the proofs in~\cite{PaoSte11-acycl} do not depend on the
results of~\cite{Smirnov94}, we may consider also the results present in this paper independent on that of~\cite{Smirnov94} even
in the Euclidean setting.

\section{Notation and preliminaries}

The metric spaces are always in the sequel assumed to be
complete.
The parametric length of a Lipschitz curve $\theta\colon [a,b]\to E$  will be denoted
by $\ell(\theta)$.
The space of Lipschitz functions $\theta\colon [0,1]\to E$ equipped with uniform distance and factorized by
reparameterization will be denoted by $\Theta(E)$ (see~\cite{PaoSte11-acycl}). Every element of $\Theta(E)$ therefore
represents an oriented rectifiable curve. For a finite Borel measure $\eta$ over $\Theta(E)$ we set $\eta(i):=
e_{i\#}\eta$, where $e_i\colon \Theta(E)\to E$ are defined by $e_i(\theta):=\theta(i)$, $i=0,1$.

In the sequel we will always assume that the mass measures of the currents we are dealing with are all tight
(in fact, Radon, since the underlying metric space is complete).
This is
not restrictive because, as mentioned in~\cite{AmbrKirch00}, the theory of metric currents remains valid under such a requirement.
Thus, all our results hold in every complete metric space $E$ for normal currents $T$
when its mass measure $\mu_T$ (and the mass measure of its boundary $\mu_{\partial T}$, if appropriate) is tight, and hence,
in particular, for normal currents in a Polish (i.e.\ complete separable metric) space.
Equivalently, one could assume that
the density character (i.e.\ the minimum cardinality of a dense subset) of every metric space is an Ulam number.
This guarantees that every finite positive Borel measure is tight (even Radon when the space is complete),
is concentrated on some $\sigma$-compact subset and the support of this measure is separable (see, e.g., proposition~7.2.10
from~\cite{Bogachev06}), and is consistent with the Zermelo-Fraenkel set theory.

All the measures we will consider in the sequel are signed Borel measures
with finite total variation over some metric space $E$.
The narrow topology on measures is defined by duality with the space $C_b(E)$ of continuous bounded functions.
The supremum norm over $C_b(E)$ is denoted by $\|\cdot\|_\infty$.

For metric spaces $X$ and $Y$ we denote
by $\Lip(X,Y)$  (resp.\ $\Lip_k(X,Y)$ and $\Lip_b(X,Y)$) the set of all Lipschitz maps
(resp.\ all Lipschitz maps
with Lipschitz constant
$k$, the set of bounded Lipschitz maps)
$f\colon X\to Y$.
If $Y=\R$ we write just $\Lip(X)$, $\Lip_k(X)$, $\Lip_b(X)$ respectively.

For the metric currents we use the notation from~\cite{PaoSte11-acycl} which is almost completely
taken from~\cite{AmbrKirch00}, except mainly the notation for the mass measure.
In particular, $D^k(E)=\Lip_b(E)\times (\Lip (E))^k$ stands for the space of
metric $k$-forms, its elements (i.e.\ $k$-forms) being denoted
by $f\,d\pi$, where $f\in \Lip_b(E)$, $\pi\in (\Lip(E))^k$,
 $\M_k(E)$ stands for the space of $k$-dimensional metric currents,
 $\mathcal{N}_k(E)$ stands for the space of $k$-dimensional normal metric currents,
$\MM(T)$ stands for the mass of a current $T$, and $\mu_T$ stands for the mass measure
associated to this current.  The one-dimensional current associated to a
Lipschitz curve $\theta\colon [a,b]\to E$  will be denoted
by $\ld\theta\rd$, namely,
\[
\ld\theta\rd(f\,d\pi):=\int_a^b f(\theta(t))\, d\pi(\theta(t))
\]
for every $f\, d\pi \in D^1(E)$. Recall that $\MM(\ld\theta\rd)\leq \ell(\theta)$.
The weak topology in $\M_k(E)$ is defined by a family of seminorms
$\{ T\mapsto |T(\omega)|\,:\, \omega\in D^k(E)\}$. It is clearly a Hausdorff locally convex topology.
The notation $S\leq T$ means that $S$ is a subcurrent of $T$ in the sense that
$\MM(S)+\MM(T-S)=\MM(T)$.

\section{Decomposition of normal currents in curves}

The first important result of this paper is the following statement.

\begin{theorem}\label{th_cycl1_repr_ac1}
Let $T\in \M_1(E)$ satisfy $\partial T=0$.
Then there is a finite positive Borel measure $\bar\eta$ over $\Theta(E)$ such that
\begin{align*}
T(\omega)&=\int_{\Theta(E)} \ld\theta\rd (\omega)\, d\bar\eta(\theta),\\
\MM(T)&=\int_{\Theta(E)} \MM(\ld\theta\rd)\, d\bar\eta(\theta)=\int_{\Theta(E)} \ell(\theta)\, d\bar\eta(\theta),
\end{align*}
for all $\omega\in D^1(E)$, while $\bar\eta(0)=\bar\eta(1)=\mu_T$ and
$\bar\eta$-a.e.\ $\theta\in \Theta(E)$ belongs to $\supp T$
and has $\MM(\ld\theta\rd)=\ell(\theta)=1$.
\end{theorem}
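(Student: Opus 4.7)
My plan is to reduce the statement to the acyclic decomposition result of~\cite{PaoSte11-acycl} via a one-dimensional extension trick in the spirit of~\cite{Smirnov94}. Since $T$ is a cycle, the acyclic decomposition cannot be applied to $T$ directly; instead I would lift $T$ to a current $T' \in \M_1(\tilde E)$ on the extended space $\tilde E := E \times [0,1]$ whose ``vertical'' coordinate is strictly monotone along its integral curves---so that $T'$ is acyclic---and which satisfies $\pi_\# T' = T$ for the canonical projection $\pi\colon \tilde E \to E$ and $\partial T' = \mu_T \otimes \delta_1 - \mu_T \otimes \delta_0$. The acyclic decomposition of $T'$ can then be pushed forward via $\pi$ to a decomposition of $T$.

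The central technical step is the construction of the lift $T'$. It should combine a ``horizontal copy of $T$'' at each slice $E\times\{s\}$ with the ``vertical segments'' $\{x\}\times[0,1]$ weighted by $\mu_T$, but a naive additive combination is cyclic in the horizontal direction. In the Euclidean case~\cite{Smirnov94} this cyclic residue is killed by coupling horizontal motion to vertical via the flow of $T$'s vector field, producing the current associated to $(v_T,1)$; in the metric setting no such flow is available, and the remedy is either an approximation argument with currents admitting a flow-like representation, or an iterative cyclic/acyclic splitting that progressively removes cyclic residues whose horizontal projections have strictly smaller mass. Once $T'$ is available, applying the result of~\cite{PaoSte11-acycl} furnishes a finite positive Borel measure $\eta'$ on $\Theta(\tilde E)$ with
\[
T' = \int_{\Theta(\tilde E)} \ld\sigma\rd \, d\eta'(\sigma), \qquad \MM(T') = \int \ell(\sigma)\, d\eta'(\sigma),
\]
with $\eta'(0) = \mu_T \otimes \delta_0$, $\eta'(1) = \mu_T \otimes \delta_1$, and $\eta'$-a.e.\ $\sigma$ injective and contained in $\supp T'$.

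Setting $\Phi(\sigma) := \pi \circ \sigma$ and $\bar\eta := \Phi_\# \eta'$, pushing the decomposition of $T'$ forward by $\pi$ yields $T = \int \ld\theta\rd\, d\bar\eta(\theta)$, with $\bar\eta(i) = \pi_\#(\mu_T \otimes \delta_i) = \mu_T$ for $i=0,1$, and $\bar\eta$-a.e.\ $\theta$ contained in $\supp T$. A judicious choice of metric on $\tilde E$ (so that the vertical direction contributes unit length to each $\sigma$) arranges $\ell(\pi\circ\sigma) = \ell(\sigma) - 1$, and combining this with the mass identity for $T'$ together with absence of cancellation (built into the construction of the helical lift) gives $\MM(T) = \int \ell(\theta)\, d\bar\eta$. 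To upgrade to $\ell(\theta)=1$ a.e.\ I would cut each curve by arclength into unit-length pieces and resample $\bar\eta$ uniformly along each curve; the stationarity $\bar\eta(0) = \bar\eta(1) = \mu_T$ inherited from the cycle structure of $T$ ensures the endpoint measures remain equal to $\mu_T$ after resampling, and injectivity of $\sigma$ descends to each piece, giving $\MM(\ld\theta\rd) = \ell(\theta) = 1$. The main obstacle I anticipate is the first step: rigorously defining an acyclic lift of a metric cycle without access to a vector field or flow, using only the duality between metric currents and Lipschitz $1$-forms.
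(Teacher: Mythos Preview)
Your overall strategy---lift to $E\times[0,1]$, apply the acyclic decomposition of~\cite{PaoSte11-acycl}, project back---is exactly the paper's. The genuine gap is that you have misdiagnosed the main obstacle. The ``naive additive combination''
\[
T' := T\times \mu_{\ld[0,1]\rd} + \mu_T\times \ld[0,1]\rd
\]
is \emph{already acyclic}; no flow, approximation, or iterative splitting is needed. The argument is short: if $C'\leq T'$ is a cycle, project to the second factor. Since $P_\# T' = \MM(T)\,\ld[0,1]\rd$ has no nontrivial subcycles, $P_\# C'=0$; but the chain
$\MM(P_\# T' - P_\# C') + \MM(P_\# C') \leq \MM(T'-C')+\MM(C') = \MM(T') = \MM(P_\# T')$
forces $\MM(C')=\MM(P_\# C')=0$. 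Your intuition that ``horizontal copies of $T$'' give cyclic subcurrents fails because $T\times\mu_{\ld[0,1]\rd}$ is not a subcurrent of $T'$ once the vertical part is added and the mass is computed in the right metric.

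That metric choice is the second point you miss. Your formula $\ell(\pi\circ\sigma)=\ell(\sigma)-1$ betrays an $\ell^1$ product metric; the paper instead equips $E\times[0,1]$ with the sup-metric $d_\infty$, under which one has $\MM(T')=\MM(T)$ (this is the content of a separate lemma on masses of such product currents) and $\ell(P_E\circ\sigma)\leq\ell(\sigma)$. The mass identity then forces $\ell(P_E\circ\sigma)=\ell(\sigma)$ for $\eta'$-a.e.\ $\sigma$, and since each $\sigma$ runs from height $0$ to height $1$ one gets $\ell(\sigma)\geq 1$. Combined with $\int\ell\,d\bar\eta=\MM(T)$ and $\bar\eta(\Theta(E))=\mu_T(E)=\MM(T)$ (from $\bar\eta(0)=\mu_T$), this gives $\ell(\theta)=1$ for $\bar\eta$-a.e.\ $\theta$ directly---your resampling step is unnecessary.
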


To prove this theorem we need some preliminary constructions.
Equip the space $E\times [0,1]$ with the distance
\[
d_\infty((u_1,t_1), (u_2,t_2)):= d(u_1,u_2)\vee \lvert t_1-t_2\rvert.
\]
Let $T\in \M_1(E)$ satisfy $\partial T=0$.
Define
\[
T':=T\times \mu_{\ld [0,1]\rd} + \mu_T\times \ld [0,1]\rd\in \mathcal{N}_1(E\times [0,1]).
\]
Letting $P_E\colon (x,t)\in E\times [0,1]\mapsto x\in E$ and $P\colon (x,t)\in E\times [0,1]\to t\in [0,1]$, we get
\[
P_{E\#}T'=T, \qquad P_{\#} T'= \MM(T) \ld [0,1]\rd.
\]
Further,
\[
\partial T'=\mu_T\otimes (\delta_1-\delta_0)
\]
and $\MM(T')=\MM(T)$ by Lemma~\ref{lm_cycl1_Mprod1}.
At last, we have the following statement.

\begin{lemma}\label{lm_cycl1_proj1}
$T'$ is acyclic.
\end{lemma}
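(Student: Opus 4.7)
The plan is to push $T'$ down to $[0,1]$ via $P$, reducing acyclicity to a one-dimensional calculus fact. The crucial observation is that $P$ is mass-preserving on $T'$: we are told $\MM(T')=\MM(T)$, while $P_\#T'=\MM(T)\,\ld[0,1]\rd$ also has mass $\MM(T)$.

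Fix a subcurrent $S\leq T'$ with $\partial S=0$ and set $R:=T'-S$, so that $\MM(S)+\MM(R)=\MM(T')$. Since $P$ is $1$-Lipschitz for $d_\infty$, the chain
\[
\MM(P_\#T')\leq\MM(P_\#S)+\MM(P_\#R)\leq\MM(S)+\MM(R)=\MM(T')=\MM(P_\#T')
\]
must saturate. The first equality forces $P_\#S\leq P_\#T'=\MM(T)\,\ld[0,1]\rd$, while the second forces $\MM(P_\#S)=\MM(S)$.

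Next I argue that $P_\#S=0$. Since $\partial(P_\#S)=P_\#\partial S=0$, the current $P_\#S$ is a normal $1$-current on $[0,1]$ with vanishing boundary. Its mass measure, dominated by $\MM(T)\,\Ll^1\res[0,1]$, is absolutely continuous with respect to Lebesgue measure, so $P_\#S$ admits a representation of the form $g\,\ld\R\rd$ with $g\in L^1(\R)$ supported in $[0,1]$. The condition $\partial(g\,\ld\R\rd)=0$ then gives $g'=0$ as a distribution on $\R$, forcing $g$ to be constant; combined with $g=0$ outside $[0,1]$, this yields $g\equiv 0$. Thus $P_\#S=0$, and together with $\MM(S)=\MM(P_\#S)$ this gives $\MM(S)=0$, i.e.\ $S=0$, as required for acyclicity.

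The main obstacle is the first step: pushforwards of subcurrents are not automatically subcurrents under arbitrary Lipschitz maps. The reason it works here is precisely the mass-preserving nature of $P$ on $T'$, which collapses the inequality chain into equalities and thereby transfers the subcurrent relation to $[0,1]$. Once that observation is in place, the rest reduces to the elementary one-dimensional fact that a normal $1$-current on $\R$ with zero boundary and compact support must vanish.
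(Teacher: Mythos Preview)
Your proof is correct and follows essentially the same route as the paper: push a cyclic subcurrent forward by $P$, use the equality $\MM(T')=\MM(P_\#T')$ to turn the chain of inequalities into equalities, conclude that the pushforward is a cyclic subcurrent of $\MM(T)\,\ld[0,1]\rd$ and hence zero, and then transfer this back via $\MM(S)=\MM(P_\#S)$. The only difference is cosmetic: where the paper simply invokes the acyclicity of $\ld[0,1]\rd$ and writes ``hence $C=0$'', you spell out that step explicitly via the representation $P_\#S=g\,\ld\R\rd$ and the distributional identity $g'=0$.
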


\begin{proof}
Let $C' \leq T'$ be a cycle, and let $C:=P_{\#}C'$.
We have
\begin{align*}
\MM(P_{\#} T' - C) + \MM(C) & = \MM(P_{\#}(T'-C')) + \MM(P_{\#}C') \\
& \leq \MM(T'-C')+\MM(C') = \MM(T')=\MM(P_{\#}T'),
\end{align*}
which means $C\leq P_{\#}T'= \MM(T) \ld [0,1]\rd$, and hence $C=0$.
But
$\MM(C)=\MM(C')$, since otherwise in the above relationship the inequality would be strict, which is impossible.
Hence, $\MM(C')=0$, i.e. $C'=0$.
\end{proof}

We are now ready to prove the announced result.

\begin{proof}[Proof of Theorem~\ref{th_cycl1_repr_ac1}]
By representation theorem for acyclic one-dimensional currents~\cite{PaoSte11-acycl}[theorem~5.1] one has
\begin{align*}
T'(\omega')&=\int_{\Theta(E\times [0,1])} \ld\theta\rd (\omega)\, d\eta'(\theta),\\
\MM(T')&=\int_{\Theta(E\times [0,1])} \ell(\theta)\, d\eta'(\theta),
\end{align*}
for some finite positive Borel measure $\eta'$ over $\Theta(E\times [0,1])$ and
for all $\omega'\in D^1(E\times [0,1])$, while $\eta'$-a.e.\ $\theta\in E\times [0,1]$ is an
arc belonging to $\supp T'$,
and
\[
\eta'(0)=\mu_T\otimes \delta_0,\qquad
\eta'(1)=\mu_T\otimes \delta_1.
\]
Denoting $\bar\eta:=P_{E\#}\eta'$, we get
\begin{equation}\label{eq_cycl1_reprMT1}
\begin{aligned}
T(\omega) &=(P_{E\#}T')(\omega)=
T'(\omega\circ P_E)  =
\int_{\Theta(E\times [0,1])} \ld\theta\rd (\omega\circ P_E)\, d\eta'(\theta),\\
&= \int_{\Theta(E\times [0,1])} \ld P_E(\theta)\rd (\omega)\, d\eta'(\theta)=
\int_{\Theta(E)} \ld\theta\rd (\omega)\, d\bar\eta(\theta).
\end{aligned}
\end{equation}
This also implies
\[
\MM(T)\leq \int_{\Theta(E)} \MM(\ld\theta\rd)\, d\bar\eta(\theta) \leq \int_{\Theta(E)} \ell(\theta)\, d\bar\eta(\theta),
\]
On the other hand,
\begin{equation}\label{eq_cycl1_reprMT1b}
\begin{aligned}
\int_{\Theta(E)} \ell(\theta)\, d\bar\eta(\theta) & =
\int_{\Theta(E\times [0,1])} \ell(P_E(\theta))\, d\eta'(\theta) \\
&\leq \int_{\Theta(E\times [0,1])} \ell(\theta)\, d\eta'(\theta) = \MM(T')=\MM(T),
\end{aligned}
\end{equation}
and hence
\begin{equation}\label{eq_cycl1_reprMT2}
\MM(T)=\int_{\Theta(E)} \MM(\ld\theta\rd)\, d\bar\eta(\theta)=\int_{\Theta(E)} \ell(\theta)\, d\bar\eta(\theta).
\end{equation}
Further, in~\eqref{eq_cycl1_reprMT1b} the inequality is in fact an equality, and hence
\[
\ell(P_E(\theta))=\ell(\theta)\geq d_\infty(\theta(0),\theta(1))\geq 1
\]
 for $\eta'$-a.e.\ $\theta\in \Theta(E\times [0,1])$,
the latter inequality being true because $P(\theta(0))=0$ and $P(\theta(1))=1$ for $\eta'$-a.e.\ $\theta\in \Theta(E\times [0,1])$.
Thus $\ell(\theta)\geq 1$ for $\bar\eta$-a.e.\ $\theta\in \Theta(E)$.
But then from~\eqref{eq_cycl1_reprMT2} one has
$\MM(T)\geq \bar\eta(\Theta(E))$.
Recall now that $\bar\eta(0)= \bar\eta(1)=\mu_T$. This implies
$\MM(T)\leq \bar\eta(\Theta(E))$, and therefore $\MM(T)= \bar\eta(\Theta(E))$.
Thus $\ell(\theta)=1$ for $\bar\eta$-a.e.\ $\theta\in \Theta(E)$.

The relationship~\eqref{eq_cycl1_reprMT1} implies also
\begin{align*}
(\partial T)(f)&=\int_{\Theta(E)} \left( f(\theta(1))-f(\theta(0))\right)\, d\bar\eta(\theta)\\
& = \int_{E} f(x)\, d\bar\eta(1)(x) - \int_{E} f(x)\, d\bar\eta(0)(x)
=\int_E f(x)\, d
(\bar\eta(1)-\bar\eta(0))(x),
\end{align*}
so that $\partial T=\bar\eta(1)-\bar\eta(0)$,
which gives $\bar\eta(0)=\bar\eta(1)$.
Finally,  $\bar\eta$-a.e.\
$\theta\in \Theta(E)$ belongs to $\supp P_{E\#}T'=\supp T$.
\end{proof}

The above theorem allows to formulate the following corollary on the structure of all
one-dimensional real metric currents.

\begin{corollary}\label{co_cycl1_repr_ac2a}
Let $T\in \mathcal{N}_1(E)$.
Then there is a finite positive Borel measure $\bar\eta$ over $\Theta(E)$
with the total mass $\bar\eta(\Theta(E)) \leq \MM(T)+\MM(\partial T)$
such that
\begin{align*}
T(\omega)&=\int_{\Theta(E)} \ld\theta\rd (\omega)\, d\bar\eta(\theta),\\
\MM(T)&=\int_{\Theta(E)} \MM(\ld\theta\rd)\, d\bar\eta(\theta)=\int_{\Theta(E)} \ell(\theta)\, d\bar\eta(\theta),
\end{align*}
for all $\omega\in D^1(E)$, with $\bar\eta$-a.e.\ $\theta\in \Theta(E)$ belonging to $\supp T$.
\end{corollary}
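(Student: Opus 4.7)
The plan is to split $T$ into an acyclic part and a cyclic part with additive mass, then apply the acyclic decomposition theorem of~\cite{PaoSte11-acycl} to the former and Theorem~\ref{th_cycl1_repr_ac1} to the latter, summing the two resulting measures.

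The first and most delicate step is to produce subcurrents $T_a, T_c \leq T$ with $T = T_a + T_c$, $\partial T_c = 0$ (equivalently $\partial T_a = \partial T$), and $\MM(T) = \MM(T_a) + \MM(T_c)$. I would extract a maximal cyclic subcurrent $T_c$ of $T$ by a Zorn's lemma argument: all cyclic subcurrents of $T$ have mass bounded by $\MM(T)$, and any chain with respect to $\leq$ should admit an upper bound obtained as a weak limit, which is itself a cycle and a subcurrent of $T$. The residual $T_a := T - T_c$ is then acyclic, for any cycle $C\leq T_a$ would make $C+T_c$ a strictly larger cyclic subcurrent, contradicting maximality. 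This is the main obstacle I anticipate: the identity $\MM(S)+\MM(T-S)=\MM(T)$ defining a subcurrent is only lower semicontinuous under weak convergence, so passing to the weak limit of a monotone chain requires care, relying on the uniform mass bound $\MM(T)$ and lower semicontinuity of mass applied to both $S$ and $T-S$. I expect the compactness and continuity ingredients for this to be already developed in~\cite{PaoSte11-acycl}.

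Once the splitting is in hand, applying~\cite{PaoSte11-acycl}[theorem~5.1] to $T_a$ yields a finite positive Borel measure $\eta_a$ on $\Theta(E)$ with
\begin{align*}
T_a(\omega) &= \int_{\Theta(E)} \ld\theta\rd(\omega)\, d\eta_a(\theta), \\
\MM(T_a) &= \int_{\Theta(E)} \ell(\theta)\, d\eta_a(\theta),
\end{align*}
$\eta_a$-a.e.\ $\theta \in \supp T_a$, with $\eta_a(0), \eta_a(1)$ corresponding to the negative and positive parts of $\mu_{\partial T}$, whence in particular $\eta_a(\Theta(E)) \leq \MM(\partial T)$. Theorem~\ref{th_cycl1_repr_ac1} applied to $T_c$ produces an analogous measure $\eta_c$ concentrated on unit-length curves in $\supp T_c$, with $\eta_c(\Theta(E)) = \MM(T_c)$. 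Setting $\bar\eta := \eta_a + \eta_c$, linearity gives both the integral representation of $T(\omega)$ and the mass identity; the support condition follows from $\supp T_a \cup \supp T_c \subset \supp T$; and
\[
\bar\eta(\Theta(E)) = \eta_a(\Theta(E)) + \eta_c(\Theta(E)) \leq \MM(\partial T) + \MM(T),
\]
as required.
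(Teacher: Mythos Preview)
Your proposal is correct and follows essentially the same route as the paper: split $T$ into an acyclic subcurrent plus a cycle, then apply \cite{PaoSte11-acycl}[theorem~5.1] to the first and Theorem~\ref{th_cycl1_repr_ac1} to the second, and sum the resulting measures. The one place you hesitate---the existence of the acyclic/cycle splitting $T=T_a+T_c$ with $T_a,T_c\leq T$---is exactly what the paper invokes as a ready-made result, namely proposition~3.8 of~\cite{PaoSte11-acycl}, so there is no need to redo the Zorn-type argument here.
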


\begin{proof}
Decompose (say, by proposition~3.8 from~\cite{PaoSte11-acycl})
$T=S+C$ with $S\leq T$ acyclic and $C\leq T$ a cycle, i.e.\ $\partial C=0$.
Use theorem~5.1 from~\cite{PaoSte11-acycl} to decompose $S$ in curves and the above
Theorem~\ref{th_cycl1_repr_ac1} to do the same for $C$. This gives the result.
\end{proof}

As a toy application we mention here for purely illustrative purposes the following immediate corollary on nonexistence of nontrivial normal currents in the space without rectifiable curves.

\begin{corollary}\label{co_cycl1_noNk}
Let $E$ be a metric space which has no nonconstant Lipschitz curves.
Then $\mathcal{N}_k(E)$ contains only the zero current for all $k\geq 1$.
\end{corollary}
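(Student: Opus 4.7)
The argument naturally splits into the base case $k=1$ and an induction for $k\geq 2$.

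For the base case, I would apply Corollary~\ref{co_cycl1_repr_ac2a} to an arbitrary $T\in\mathcal{N}_1(E)$; it yields a finite positive Borel measure $\bar\eta$ on $\Theta(E)$ with
\[
\MM(T)=\int_{\Theta(E)} \ell(\theta)\,d\bar\eta(\theta).
\]
Under the hypothesis every Lipschitz curve $\theta\colon[0,1]\to E$ is constant, so $\ell(\theta)=0$ for every $\theta\in\Theta(E)$. Hence $\MM(T)=0$, which forces $T=0$.

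For the inductive step, suppose $\mathcal{N}_{k-1}(E)=\{0\}$ for some $k\geq 2$ and take $T\in\mathcal{N}_k(E)$. Given any $\pi\in\Lip(E)$, the Ambrosio--Kirchheim slicing theorem produces, for a.e.\ $t\in\R$, slices $\langle T,\pi,t\rangle\in\mathcal{N}_{k-1}(E)$ satisfying
\[
T(\omega\wedge d\pi)=\int_{\R}\langle T,\pi,t\rangle(\omega)\,dt
\qquad\text{for every } \omega\in D^{k-1}(E).
\]
By the inductive hypothesis every slice vanishes, so the right-hand side is zero for all choices of $\omega$ and $\pi$. Since every $k$-form in $D^k(E)$ can be written as $f\,d\pi_1\wedge\dots\wedge d\pi_{k-1}\wedge d\pi_k=(f\,d\pi_1\wedge\dots\wedge d\pi_{k-1})\wedge d\pi_k$, taking $\pi=\pi_k$ and $\omega=f\,d\pi_1\wedge\dots\wedge d\pi_{k-1}$ shows $T$ annihilates $D^k(E)$, hence $T=0$.

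The one point requiring care, and what I see as the only mild obstacle, is the invocation of slicing: one needs that the slices of a normal current are themselves normal currents of one dimension lower and that the slicing identity holds against arbitrary $(k-1)$-forms. Both are standard facts in the Ambrosio--Kirchheim framework and require no hypotheses beyond those already in force, so the reduction to the one-dimensional case—which is the substantive content delivered by Corollary~\ref{co_cycl1_repr_ac2a}—goes through without further issues.
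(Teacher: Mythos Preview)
Your proof is correct and follows essentially the same route as the paper: the base case $k=1$ via Corollary~\ref{co_cycl1_repr_ac2a}, and the inductive step via the Ambrosio--Kirchheim slicing theorem to reduce to dimension $k-1$. The only cosmetic difference is that the paper phrases the induction as ``$T\res d\pi_k=0$'' and then invokes the alternating property, whereas you invoke the integral slicing identity directly; the content is the same.
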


\begin{proof}
For $k=1$ this follows from the Corollary~\ref{co_cycl1_repr_ac2a}. For general $k$ proceed by induction: suppose that
the statement is true for $k-1$, i.e.\ $\mathcal{N}_{k-1}(E)$ contains only the zero current. Let $T\in \mathcal{N}_k(E)$, and
consider an arbitrary $\pi_k\in \Lip(E)$. Then for every $t\in\R$ the slice
$\langle T,\pi_k, t \rangle\in \mathcal{N}_{k-1}(E)$ and hence $\langle T,\pi_k, t \rangle=0$ by induction assumption, which
by slicing theorem~5.6 from~\cite{AmbrKirch00} gives $T\res d\pi_k=0$. Consider now an arbitrary
$f\,d\pi\in D^{k-1}(E)$.
Then minding the alternating property of currents (theorem~3.5 from~\cite{AmbrKirch00}), we get
\[
|T(f\, d\pi_1\wedge \ldots\wedge d\pi_{k-1}\wedge d\pi_k)|= |T\res d\pi_k (f\, d\pi)|=0,
\]
so that $T=0$.
\end{proof}

\section{Decomposition of cycles in solenoids}

This section is dedicated to another decomposition result for one-dimensional real metric currents without boundaries
(i.e.\ cycles). In fact, given the validity of Theorem~\ref{th_cycl1_repr_ac1}, it is
natural to ask whether any cycle can be decomposed as an integral of (currents associated to) \emph{closed} curves. As it is shown in~\cite{Smirnov94} this is unfortunately not true even in the $3$-dimensional Euclidean space $\R^3$ but at least in all Euclidean spaces there is a natural decomposition of cycles in so-called \emph{solenoids}
(called also solenoidal vector charges in~\cite{Smirnov94}). We will extend this result to generic metric spaces.

We start with the following corollary of Theorem~\ref{th_cycl1_repr_ac1}.

\begin{corollary}\label{co_cycl1_repr_ac2}
Let $T\in \M_1(E)$ satisfy $\partial T=0$.
There is a finite positive  Borel measure $\tilde\eta$ over $X:= C([0,1];E)$ (with the topology of uniform
convergence)
concentrated over $\Lip_1([0,1];E)$ such that
\begin{align*}
T(\omega)&=\int_{X} \ld\theta\rd (\omega)\, d\tilde\eta(\theta),\\
\MM(T)&=\int_{X} \ell(\theta)\, d\tilde\eta(\theta)=\tilde\eta(X),
\end{align*}
for all $\omega\in D^1(E)$, while $\tilde\eta$-a.e.\ $\theta\in X$ belongs to $\supp T$,
and $\tilde\eta(0)=\tilde\eta(1)$.
\end{corollary}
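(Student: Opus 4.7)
The plan is to obtain $\tilde\eta$ from the measure $\bar\eta$ of Theorem~\ref{th_cycl1_repr_ac1} by selecting, in each equivalence class modulo reparameterization, the constant-speed representative defined on $[0,1]$. Since $\bar\eta$-a.e.\ curve in $\Theta(E)$ has length exactly $1$, such a representative automatically has Lipschitz constant at most $1$, so it lies in $\Lip_1([0,1];E)\subset X$.

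First I would invoke Theorem~\ref{th_cycl1_repr_ac1} to produce a finite positive Borel measure $\bar\eta$ on $\Theta(E)$ satisfying the stated identities, with $\ell(\theta)=\MM(\ld\theta\rd)=1$ and $\theta\in\supp T$ for $\bar\eta$-a.e.\ $\theta$, and $\bar\eta(0)=\bar\eta(1)=\mu_T$. Next I would construct a Borel map $r\colon \Theta(E)\to X$ sending each equivalence class $\theta$ of length $1$ to its constant-speed representative on $[0,1]$, fixing an arbitrary measurable choice on the $\bar\eta$-negligible exceptional set. Measurability of $r$ can be established either by building the constant-speed representative explicitly from the arc-length function $t\mapsto \ell(\theta|_{[0,t]})$, which depends measurably on $\theta\in\Theta(E)$, and inverting it, or by invoking a Kuratowski--Ryll-Nardzewski type selection argument applied to the multivalued map sending each class to its set of constant-speed representatives.

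Setting $\tilde\eta:=r_{\#}\bar\eta$, I would verify the claim as follows. Since $\bar\eta$-a.e.\ $\theta$ is an arc (because it lies in $\supp T$ and has length $1$ equal to $\MM(\ld\theta\rd)$, so is injective up to a negligible set), the current $\ld\theta\rd$ depends only on the equivalence class modulo orientation-preserving reparameterization. Therefore
\begin{align*}
\int_X \ld\theta\rd(\omega)\, d\tilde\eta(\theta)
&=\int_{\Theta(E)}\ld r(\theta)\rd(\omega)\, d\bar\eta(\theta)
=\int_{\Theta(E)}\ld\theta\rd(\omega)\, d\bar\eta(\theta)=T(\omega),
\end{align*}
for every $\omega\in D^1(E)$, and the analogous change of variables yields
$\int_X \ell(\theta)\, d\tilde\eta(\theta)=\int_{\Theta(E)}\ell(\theta)\, d\bar\eta(\theta)=\MM(T)$.
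Because $\ell(r(\theta))=1$ for $\bar\eta$-a.e.\ $\theta$, this integral also equals $\tilde\eta(X)$. Finally, since $r(\theta)(i)=\theta(i)$ for $i=0,1$ (evaluation at endpoints is invariant under endpoint-preserving reparameterization), $\tilde\eta(i)=\bar\eta(i)=\mu_T$, and in particular $\tilde\eta(0)=\tilde\eta(1)$.

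The main obstacle is the Borel measurability of the reparameterization map $r$; once this technical point is handled, the rest of the proof is a routine change-of-variables verification built on top of Theorem~\ref{th_cycl1_repr_ac1}.
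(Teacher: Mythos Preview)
Your approach is exactly the paper's: define the constant-speed reparameterization map (the paper calls it $h\colon\Theta(E)\to X$) and set $\tilde\eta:=h_\#\bar\eta$, with the conclusions of Theorem~\ref{th_cycl1_repr_ac1} carrying over verbatim. The paper's proof is two lines and does not comment on measurability of $h$; your parenthetical justification about arcs is unnecessary (and not quite what Theorem~\ref{th_cycl1_repr_ac1} asserts), since $\ld\theta\rd$, $\ell(\theta)$ and the endpoints are invariant under orientation-preserving reparameterization for \emph{any} Lipschitz curve, injective or not.
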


\begin{proof}
Let $h\colon \Theta(E)\to X$ send every $\theta\in \Theta(E)$ in its parameterization with constant speed.
It is enough to set then $\tilde \eta:= h_{\#}\bar\eta$, where $\bar\eta$ is provided by Theorem~\ref{th_cycl1_repr_ac1}.
\end{proof}

Now we will prove the following extension statement.

\begin{proposition}\label{prop_cycl1_repr_acR}
Let $\tilde\eta$ be a Borel measure over $C([0,1];E)$ satisfying the properties provided
by Corollary~\ref{co_cycl1_repr_ac2}. Then there is a
Borel measure $\hat\eta$ over $C(\R;E)$ (equipped with the topology of uniform convergence
over bounded intervals)
concentrated over $\Lip_1(\R;E)$ such that
\begin{itemize}
  \item[(a)] $\pi_{\#} \hat\eta=\tilde\eta$, where $\pi\colon C(\R;E)\to C([0,1];E)$ is the map defined by
  $\pi (\theta):= \theta\res[0,1]$,
  \item[(b)] $g^\pm_{\#} \hat \eta=\hat\eta$, where $g^\pm\colon C(\R;E)\to C(\R;E)$ are the shift maps defined by
  $g^\pm (\theta)(t):= \theta(t\pm 1)$,
    \item[(c)] for $\hat\eta$-a.e.\ $\theta\in C(\R;E)$ one has $\theta(\R)\subset \supp T$.
\end{itemize}
\end{proposition}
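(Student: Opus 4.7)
The plan is to build $\hat\eta$ by gluing together, in a stationary Markovian fashion, shifted copies of $\tilde\eta$, and then pushing forward to $C(\R;E)$. Assuming $T\neq 0$ (otherwise take $\hat\eta=0$), normalize by setting $\nu:=\tilde\eta/\MM(T)$ on $X:=C([0,1];E)$. Since $\tilde\eta(0)=\tilde\eta(1)$ by Corollary~\ref{co_cycl1_repr_ac2}, the probability $\nu$ has equal start and end marginals $P:=e_{0\#}\nu=e_{1\#}\nu$ on $E$. I disintegrate $\nu=\int_E \nu^y\,dP(y)$ along $e_0$, obtaining a Borel family $\{\nu^y\}$ of probability measures with $\nu^y$ concentrated on $\{\theta\in X:\theta(0)=y\}$. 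The kernel $\tilde K(\theta,\cdot):=\nu^{\theta(1)}(\cdot)$ then preserves $\nu$: for Borel $A\subset X$,
\[
\int_X \tilde K(\theta,A)\,d\nu(\theta)=\int_E \nu^y(A)\,d(e_{1\#}\nu)(y)=\int_E \nu^y(A)\,dP(y)=\nu(A).
\]

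By Kolmogorov's extension theorem applied to the consistent finite-dimensional marginals determined by the invariant measure $\nu$ and the transition kernel $\tilde K$ (here $X$ is standard Borel under the paper's standing tightness/Ulam-number hypotheses), there is a shift-invariant probability measure $Q$ on $X^\Z$ with one-dimensional marginal $\nu$ and with conditional law of $\theta_{n+1}$ given the past equal to $\tilde K(\theta_n,\cdot)$. In particular, $Q$-a.s.\ $\theta_n(1)=\theta_{n+1}(0)$ for every $n\in\Z$. On the Borel full-$Q$-measure set $Y\subset X^\Z$ of such gluable sequences, define the concatenation $\Phi\colon Y\to C(\R;E)$ by
\[
\Phi((\theta_n))(t):=\theta_n(t-n)\quad\text{for }t\in[n,n+1].
\]
Consecutive pieces match, and each piece is in $\Lip_1([0,1];E)$, so $\Phi((\theta_n))\in\Lip_1(\R;E)$; the map $\Phi$ is continuous for the topology of uniform convergence on bounded intervals. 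Set $\hat\eta:=\MM(T)\,\Phi_\# Q$.

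The three claims then follow. (a) Since $\pi\circ\Phi$ is the projection onto the $0$-th coordinate and this pushes $Q$ to $\nu$, one has $\pi_\#\hat\eta=\MM(T)\,\nu=\tilde\eta$. (b) The time shifts $g^\pm$ on $C(\R;E)$ correspond under $\Phi$ to the index shifts $\sigma^\pm\colon (\theta_n)_n\mapsto (\theta_{n\pm 1})_n$ on $X^\Z$, so the shift-invariance of $Q$ gives $g^\pm_\#\hat\eta=\hat\eta$. (c) By Corollary~\ref{co_cycl1_repr_ac2}, the trace of each $\theta_n$ lies in $\supp T$ for $Q$-a.e.\ $(\theta_n)$, whence $\Phi((\theta_n))(\R)=\bigcup_n\theta_n([0,1])\subset\supp T$ for $\hat\eta$-a.e.\ curve. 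The main technical step is the Borel disintegration and the existence of the stationary bi-infinite Markov measure, both of which rely on $X$ being standard Borel; conceptually the crucial ingredient is the equality $\tilde\eta(0)=\tilde\eta(1)$, which is exactly what makes $\tilde K$ preserve $\nu$ and hence permits the stationary doubly-infinite construction.
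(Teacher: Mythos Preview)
Your proof is correct and follows essentially the same approach as the paper: both disintegrate $\tilde\eta$ along the start map to obtain a transition kernel, use the equality $\tilde\eta(0)=\tilde\eta(1)$ to see that this kernel preserves the normalized measure, invoke a Kolmogorov-type extension to build a shift-invariant measure on $X^{\Z}$, and push forward via concatenation to $C(\R;E)$. The only cosmetic difference is that you phrase the extension step in the language of stationary bi-infinite Markov chains, whereas the paper writes out the finite-dimensional distributions $\eta_k$ explicitly and checks the two projection compatibilities by hand before citing its Lemma~\ref{lm_cycl1_probext1}.
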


\begin{remark}\label{rem_cycl1_repr_acR1}
The measure $\hat\eta$ provided by the above Proposition~\ref{prop_cycl1_repr_acR} satisfies
\begin{align*}
(m-n) T(\omega) &=
\sum_{i=n}^{m-1} \int_{C([0,1];E)} \ld\theta \rd(\omega)\,d\tilde\eta(\theta) =
\sum_{i=n}^{m-1} \int_{\Lip_1(\R;E)} \ld\pi(\theta) \rd(\omega)\,d\hat\eta(\theta) \\
& =
\sum_{i=n}^{m-1} \int_{\Lip_1(\R;E)} \ld\theta\res [0,1] \rd(\omega)\,d\hat\eta(\theta) \\
& =  \sum_{i=n}^{m-1} \int_{\Lip_1(\R;E)} \ld\theta\res [i, i+1] \rd(\omega)\,d\hat\eta(\theta)\\
& =   \int_{\Lip_1(\R;E)} \ld\theta\res [n, m] \rd(\omega)\,d\hat\eta(\theta)
\end{align*}
for all $\{m, n\}\subset \Z$ and $\omega\in D^1(E)$.
Analogously,
\begin{align*}
(m-n) \MM(T) &
=   \int_{\Lip_1(\R;E)} \MM(\ld\theta\res [n, m] \rd)\,d\hat\eta(\theta).
\end{align*}
\end{remark}

\begin{proof}
The proof will be achieved in two steps.

{\sc Step 1.}
Without loss of generality we may assume $\tilde \eta$ to be a probability measure.
Let $X:= C([0,1];E)$ be equipped with the usual uniform topology, and let
$e_t\colon X\to E$ be defined by
$e_t(\theta):=\theta(t)$.  Consider the Borel probability measures $\eta_x^\pm$ over $X$
defined by the disintegration formulae
\begin{align*}
\tilde \eta &= (e_{0\#}\tilde \eta)\otimes \eta_x^+= \tilde \eta(0)\otimes \eta_x^+,\\
\tilde \eta &= (e_{1\#}\tilde \eta)\otimes \eta_x^-= \tilde \eta(1)\otimes \eta_x^-,
\end{align*}
i.e.\
\begin{align*}
\tilde \eta(e) &= \int_E\eta_x^+(e)\,d\tilde \eta(0)(x)
= \int_E\eta_x^-(e)\,d\tilde \eta(1)(x)
\end{align*}
for every Borel $e\subset X$.
It is worth remarking that since $\tilde\eta(0)=\tilde\eta(1)$, then $\eta_x^+=\eta_x^-$, while
both measures are defined for $\tilde\eta(0)=\tilde\eta(1)$-a.e.\ $x\in E$, so we may omit
the superscripts writing just $\eta_x$ instead of $\eta_x^\pm$.

Define now inductively the measures $\eta_k$ over $X^k$ by setting
for all $k\in \N$ and Borel $e\subset X^k$,
\begin{align*}
\eta_1 &:= \tilde \eta,\\
\eta_k (e) &:= \int_{X} \eta^+_{\theta_{k-1}(1)}(e_{(\theta_1,\ldots, \theta_{k-1})})
\,d\eta_{k-1}(\theta_1,\ldots, \theta_{k-1}),
\end{align*}
where $e_{(\theta_1,\ldots,\theta_{k-1})} := \{\theta \in X \colon (\theta_1,\ldots,\theta_{k-1},\theta) \in e\}$,
so that in particular,
\begin{align*}
\eta_k (e_1\times\ldots\times e_k) &= \int_{e_1\times\ldots\times e_{k-1}} \eta^+_{\theta_{k-1}(1)}(e_k)\,d\eta_{k-1}(\theta_1,\ldots, \theta_{k-1}),\\
\eta_2 (e_1\times e_2) &= \int_{e_1} \eta^+_{\theta_1(1)}(e_2)\, d\eta_1(\theta_1).
\end{align*}
Let $\pi_{k-1}\colon X^k= X^{k-1}\times X\to X^{k-1}$ and
$\pi^{k-1}\colon X^k= X\times X^{k-1}\to X^{k-1}$ be defined by
\begin{align*}
    \pi_{k-1} (x_1,\ldots, x_{k-1}, x_k) & :=(x_1,\ldots, x_{k-1}),\\
    \pi^{k-1} (x_1, x_2,\ldots,  x_k) & := (x_2,\ldots, x_k),
\end{align*}
i.e.\ as in Lemma~\ref{lm_cycl1_probext1}.
Note that
\begin{align*}
\eta_k(e\times X) = \int_{e} \eta^+_{\theta_{k-1}(1)}(X) \, d\eta_{k-1}(\theta_1,\ldots, \theta_{k-1})
 & =\int_{e} \,d\eta_{k-1}(\theta_1,\ldots, \theta_{k-1}) \\
 &=
 \eta_{k-1} (e)
\end{align*}
for every Borel $e\subset X^{k-1}$,
which means
$\pi_{k-1\#} \eta_k = \eta_{k-1}$ for all $k\in \N$.
On the other hand,
\begin{align*}
\eta_2(X\times e_2) & = \int_{X\times e_2} \eta^+_{\theta(1)}(e_2)\, d\eta(\theta)=\int_{E} \eta_x^+ (e_2)\, d\eta(1)(x)
=\eta(e_2)=
 \eta_1 (e_2).
\end{align*}
Assuming inductively that $\pi^{k-1}_{\#} \eta_k = \eta_{k-1}$ for some $k\in \N$, $k\geq 2$, we get
\begin{align*}
\eta_{k+1}(X\times e_2\times\ldots\times e_{k+1}) & = \int_{X\times e_2\times\ldots\times e_{k+1}} \eta^+_{\theta_k(1)}(e_{k+1}) \,d\eta_k(\theta_1,\ldots, \theta_k)\\
 &=\int_{e_2\times\ldots\times e_{k+1}}\,  \eta^+_{\theta_k(1)}(e_{k+1})\,  d\eta_{k-1}(\theta_2,\ldots, \theta_k) \\
 &=
 \eta_k (e_2\times\ldots\times e_{k+1}),
\end{align*}
and hence by induction
$\pi^{k-1}_{\#} \eta_k = \eta_{k-1}$ for all $k\in \N$, $k\geq 2$.

By Lemma~\ref{lm_cycl1_probext1} there is a Borel measure $\eta_*$ over $X^\Z$
such that for $p_j(x):= (x)_j$ one has
\begin{equation}\label{eq_etastar1}
\eta_*\left(
\bigcap_{j=k}^l p_j^{-1} (e_j)
\right)= \eta_{l-k}
\left(
\prod_{j=k}^l e_j
\right)
\end{equation}
for $p_j(x)=x_j$.
In particular,
one has
\begin{itemize}
\item[(i)] $\eta_*$ is concentrated over $(\Lip_1([0,1];E))^\Z$,
\item[(ii)]
$\bar\pi_{\#} \eta_*=\tilde\eta$, where $\bar\pi\colon X^\Z\to X^\Z$ is the map defined by
  $\bar\pi (\bar\theta):= \bar\theta_1$,
\item[(iii)] $\eta_*$ is invariant with respect to the shift
maps $\bar g^\pm \colon X^\Z\to X^\Z$ defined by
  $(\bar g^\pm (\bar \theta))_k:= \theta_{k\pm 1}$,
\item[(iv)] for $\eta_*$-a.e.\ $\bar\theta = \{\theta_k\}_{k\in \Z}$ one has
$\theta_k(1)=\theta_{k+1}(0)$ for all $k\in \Z$,
\item[(v)] for $\eta_*$-a.e.\ $\bar\theta = \{\theta_k\}_{k\in \Z}$ one has $\theta_k\subset \supp T$
for all $k\in \Z$.
\end{itemize}
In fact,~(ii) and~(iii) are immediate from~\eqref{eq_etastar1},~(i) follows from the fact that $\tilde\eta$ is concentrated over $\Lip_1([0,1];E)$, while to prove~(iv) it is enough, in view of~(iii), to prove that
\[
\eta_2(\{(\theta_1,\theta_2)\in X^2\colon \theta_1(1)\neq \theta_2(0)\})=0.
\]
The latter equality follows by a simple calculation
\begin{align*}
    \eta_2(\{(\theta_1,\theta_2)\in X^2\colon \theta_1(1)\neq \theta_2(0)\}) & =
    \int_X \eta^+_{\theta_1(1)}(\{\theta_2\in X\colon \theta_1(1)\neq \theta_2(0)\})\,d\tilde\eta(\theta_1)\\
    &= \int_E \eta^+_{x}(\{\theta_2\in X\colon x\neq \theta_2(0)\})\,d\tilde\eta(1)(x)=0,
\end{align*}
the final equality being due to the fact that $\eta^+_x$ is concentrated over
$e_0^{-1}(x)$.
Finally,
\begin{align*}
\eta_*(\{\bar\theta\colon \theta_k\not\in \supp T\}) & =
\eta_*(\{\bar\theta\colon \theta_1\not\in \supp T\})  \mbox{ by~(iii)}\\
& =
\tilde\eta(\{\theta\colon \theta\not\in \supp T\})  \mbox{ by~(ii)}\\
& =0,
\end{align*}
which proves~(v).

{\sc Step 2.}
Define
the map $q\colon X^{\Z}\to 
E^{\R}$ by setting
\[
q(\bar\theta)(t):= \theta_{\lfloor t\rfloor} (\{t\}),
\]
and set $\hat \eta:= q_{\#}\eta_*$.
Clearly,
for $\eta_*$-a.e.\ $\bar\theta = \{\theta_k\}_{k\in \Z}$ one has
$q(\bar\theta)\in C(\R;E)$, since for every $k\in \Z$ one has
\begin{align*}
\lim_{t\to k-0} q(\bar\theta)(t) &= 
\theta_{k-1} (1) = \theta_k(0), \qquad \mbox{ by~(iv)}\\
& =\lim_{t\to k+0} q(\bar\theta)(t),
\end{align*}
and hence, by~(i), $q(\bar\theta)\in\Lip_1 (\R;E)$, so that
$\hat\eta$ is concentrated over $\Lip_1 (\R;E)$. Finally,~(a) follows from~(ii),~(b) follows from~(iii)
and~(c) follows from~(v).
\end{proof}

We are now in a position to prove our main results regarding decomposition of cycles in solenoids.
Let $E$ be a metric space with distance $d$ and $X\subset E$ be a $\sigma$-compact set.
Consider $\bar X$ to be equipped with the distance $d$ and consider the new distance $\tilde d$ over $\bar X$ provided by Lemma~\ref{lm_cycl1_sepremetr1} (with $\bar X$ in place of
$E$) and
let $\tilde X$ stand for the completion of $\bar X$ with respect to $\tilde d$ (equipped with $\tilde d$).
We may write, slightly abusing the notation, $\Lip(\tilde X)\subset \Lip(\bar X)$ identifying each $u\in \Lip(\tilde X)$ with its restriction to
$\bar X$. Analogously, $C_b(\tilde X)\subset C_b(\bar X)$. Thus, $D^k(\tilde X)\subset D^k(\bar X)$, and hence,
$\M_k(\bar X)\hookrightarrow \M_k(\tilde X)$ with continuous immersion (namely, for every $T\in \M_k(\bar X)$ one has
that $\tilde{\MM}(T)\leq \MM(T)$, where $\tilde{\MM}$ stands for the mass norm in $\M_k(\tilde X)$).
We start with the following general result.

\begin{theorem}\label{th_decompNorm2_cycl0a}
For every $T\in \mathcal{N}_1(E)$
having $\partial T=0$
and supported over a $\sigma$-compact set $X\subset E$
there is a finite positive Borel measure $\hat\eta$ over
$\Lip_1(\R;E)$
such that
for $\hat\eta$-a.e.\ $\theta$
there is a limit
\begin{equation}\label{eq_cycl1_Sthet1}
S_\theta =\lim_{t\to +\infty} \frac{1}{2t}\ld \theta \res [-t,t]\rd \in \M_1(\tilde X)
\end{equation}
in the weak sense of currents in $\M_1(\tilde X)$,
while
\begin{equation}\label{eq_cycl1_reprT1a}
\begin{aligned}
T(\omega)&=
\int_{\Lip_1(\R;E)}
S_\theta(\omega)\, d\hat\eta(\theta)\mbox{ for all }\omega\in D^1(\tilde X),\\
\MM(T)&=
\hat\eta(\Lip_1(\R;E)) 
\end{aligned}
\end{equation}
and
the trace $\theta(\R)\subset \supp T$,
\end{theorem}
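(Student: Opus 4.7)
The approach is to apply Proposition~\ref{prop_cycl1_repr_acR} to produce a shift-invariant measure $\hat\eta$ on $\Lip_1(\R;E)$, and then to extract the pointwise limits $S_\theta$ via Birkhoff's ergodic theorem, handling the test-form dependence of the exceptional null set through the separability of $\tilde X$ under $\tilde d$.

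First, apply Corollary~\ref{co_cycl1_repr_ac2} to $T$ to obtain $\tilde\eta$ concentrated on $\Lip_1([0,1];E)$ with $\tilde\eta(X)=\MM(T)$, $\tilde\eta(0)=\tilde\eta(1)$, and $\supp\tilde\eta\subset\supp T$. Proposition~\ref{prop_cycl1_repr_acR} then lifts $\tilde\eta$ to a Borel measure $\hat\eta$ on $\Lip_1(\R;E)$ satisfying (a)--(c). Because $\pi_{\#}\hat\eta=\tilde\eta$, the total mass identity $\MM(T)=\hat\eta(\Lip_1(\R;E))$ in \eqref{eq_cycl1_reprT1a} is immediate, as is the trace property $\theta(\R)\subset\supp T$ from (c).

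Next, view $g=g^+$ as a measure-preserving transformation of the finite measure space $(\Lip_1(\R;E),\hat\eta)$. For each fixed $\omega\in D^1(\tilde X)$, the bounded measurable function $F_\omega(\theta):=\ld\theta\res[0,1]\rd(\omega)$ satisfies $F_\omega(g^k\theta)=\ld\theta\res[k,k+1]\rd(\omega)$, and Birkhoff's pointwise ergodic theorem yields
\[
\frac{1}{2N}\ld\theta\res[-N,N]\rd(\omega) \;=\; \frac{1}{2N}\sum_{k=-N}^{N-1}F_\omega(g^k\theta) \;\longrightarrow\; \bar F_\omega(\theta)
\]
$\hat\eta$-a.e.\ and in $L^1(\hat\eta)$, with $\int\bar F_\omega\,d\hat\eta = T(\omega)$ by Remark~\ref{rem_cycl1_repr_acR1}. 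The passage from integer $N$ to real $t\to+\infty$ is routine: the difference $\frac{1}{2t}(\ld\theta\res[-t,t]\rd-\ld\theta\res[-\lfloor t\rfloor,\lfloor t\rfloor]\rd)$ has mass at most $1/t$ by $1$-Lipschitzianity of $\theta$.

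The main obstacle is that the $\hat\eta$-null exceptional set depends a priori on $\omega$, whereas the theorem demands a single map $\theta\mapsto S_\theta\in\M_1(\tilde X)$ defined $\hat\eta$-a.e. This is precisely where the passage to $\tilde X$ as a complete \emph{separable} space under $\tilde d$ is indispensable: choose countable subsets $\{f_i\}\subset\Lip_b(\tilde X)$ and $\{\pi_j\}\subset\Lip_1(\tilde X)$ dense in the uniform topology, and apply Birkhoff simultaneously to the countable family $\omega_{ij}:=f_i\,d\pi_j$ to obtain a single $\hat\eta$-conull set $\Omega_0$ on which every such average converges. Since the approximating currents $\tfrac{1}{2t}\ld\theta\res[-t,t]\rd$ have mass at most $1$ (as $\theta\in\Lip_1$), the pairing $(f,\pi)\mapsto\tfrac{1}{2t}\ld\theta\res[-t,t]\rd(f\,d\pi)$ is equicontinuous on bounded subsets of $\Lip_b(\tilde X)\times\Lip(\tilde X)$, so the pointwise limit on $\{\omega_{ij}\}$ extends by density to a linear functional $S_\theta$ on all of $D^1(\tilde X)$, inheriting the continuity and locality axioms in the limit under the uniform mass bound; this places $S_\theta\in\M_1(\tilde X)$ with $\MM(S_\theta)\le 1$. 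Finally, Remark~\ref{rem_cycl1_repr_acR1} together with dominated convergence (with dominant $\|f\|_\infty\Lip(\pi)$ for $\omega=f\,d\pi$) yields $T(\omega)=\int S_\theta(\omega)\,d\hat\eta(\theta)$ for every $\omega\in D^1(\tilde X)$, completing \eqref{eq_cycl1_reprT1a}.
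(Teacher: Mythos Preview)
Your overall strategy matches the paper's: obtain $\hat\eta$ from Proposition~\ref{prop_cycl1_repr_acR}, apply Birkhoff's ergodic theorem to the shift $g^+$, and upgrade convergence from a countable family of forms to all of $D^1(\tilde X)$. The mass identity, the trace property, and the passage from integer $N$ to real $t$ are all handled correctly.

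The gap is in your equicontinuity step. You assert that because $\MM\bigl(\tfrac{1}{2t}\ld\theta\res[-t,t]\rd\bigr)\le 1$, the pairing $(f,\pi)\mapsto \tfrac{1}{2t}\ld\theta\res[-t,t]\rd(f\,d\pi)$ is equicontinuous on bounded sets with respect to the \emph{uniform} topology. The mass bound yields $|T_t(f\,d\pi)-T_t(f'\,d\pi)|\le\|f-f'\|_\infty\Lip(\pi)$, so equicontinuity in $f$ is fine; but in the $\pi$-variable it only gives $|T_t(f\,d\pi)-T_t(f\,d\pi')|\le\|f\|_\infty\,\Lip(\pi-\pi')$, and $\Lip(\pi-\pi')$ is not controlled by $\|\pi-\pi'\|_\infty$. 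Since $\Lip_1(\tilde X)$ is not separable in the Lipschitz norm, your countable dense set in the uniform topology cannot close this gap with the mass bound alone. Consequently, neither the existence of the limit for general $\omega$ nor the verification that $S_\theta$ satisfies the continuity axiom of metric currents follows from what you wrote.

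The paper closes this gap differently: it invokes weak sequential compactness of normal currents in the compact space $\tilde X$ (Ambrosio--Kirchheim, theorem~5.2), so that the family $\bigl\{\tfrac{1}{2k}\ld\theta\res[-k,k]\rd\bigr\}_k$ is precompact in the weak topology of $\M_1(\tilde X)$; then Lemma~\ref{lm_cycl1_wkmetr} says that on such precompact sets the weak topology is metrized by convergence on the countable family $\{\omega^j\}$, which Birkhoff already provides. The limit is then automatically a current. Your density approach can be salvaged, but it needs an extra ingredient: integrate by parts along the curve to get
\[
\Bigl|T_t\bigl(f\,d(\pi-\pi')\bigr)\Bigr|\le \tfrac{1}{t}\,\|f\|_\infty\,\|\pi-\pi'\|_\infty+\Lip(f)\,\|\pi-\pi'\|_\infty,
\]
using that $\MM(\partial T_t)=1/t$; together with compactness of $\tilde X$ (so that equi-Lipschitz pointwise convergence is uniform), this does give the equicontinuity you need and lets you verify the continuity axiom for $S_\theta$. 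Either way, the boundary-mass control is essential and is missing from your justification.
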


\begin{proof}
For every $\omega\in D^1(E)$
and every $\theta\in \Lip_1(\R;E)$
 we define
\[
f_\omega(\theta):= \ld\theta\res [0,1]\rd(\omega).
\]
By Remark~\ref{rem_cycl1_repr_acR1} (with $m:=1$, $n:=0$) one has that
$f_\omega\in L^1(C(\R;E); \hat\eta)$ (so that in particular
$f_\omega$ is finite on $C(\R;E)$ for
$\hat\eta$-a.e. $\theta\in C(\R;E)$).
By the ergodic theorem one has the existence for $\hat\eta$-a.e.\ $\theta\in C(\R;E)$ of a limit
\begin{equation}\label{eq_cycl1_erg0}
\begin{aligned}
\bar f_\omega(\theta) &:= \lim_{k\to +\infty}\frac{1}{2k}\sum_{j=-k}^k f_\omega((g^+)^j(\theta))\\
& =\lim_{k\to +\infty, k\in \N}\frac{1}{2k}\ld\theta\res [-k,k]\rd(\omega),
\end{aligned}
\end{equation}
and the validity of the relationship
\begin{equation}\label{eq_cycl1_erg1}
\int_{C(\R;E)} f_\omega(\theta)\,d\hat\eta(\theta) = \int_{C(\R;E)} \bar f_\omega(\theta)\,d\hat\eta(\theta).
\end{equation}
Let $\{\omega^j\}\subset D^1(\tilde X)$ be as in the proof of Lemma~\ref{lm_cycl1_wkmetr}, and let
$C_j\subset C(\R;E)$ be such a set of curves that~\eqref{eq_cycl1_erg0} 
is valid
for $\omega=\omega^j$ and all $\theta\in C(\R;E)\setminus C_j$, so that $\hat\eta(C_j)=0$. Set
$C:=\cup_j C_j$.
Minding that
$\mu_{\partial \frac 1 {2k} \ld\theta\res [-k,k]\rd}(E)= 1/2k\to 0$
as $k\to \infty$ and $\MM\left(\frac 1 {2k} \ld\theta\res [-k,k]\rd\right)\leq 1$,
for all $\theta \in \Lip_1(\R;E)$, hence for $\hat\eta$-a.e.\
 $\theta$, while the $\tilde X$ is compact,
we get
that the sequence of currents $\{\frac{1}{2k}\ld\theta\res [-k,k]\rd\}$ is precompact
in the weak topology of currents in $\M_1(\tilde X)$.
On the other hand, by the choice of $C$ one has that
the latter sequence of currents 
is convergent
in the distance $d_w$, and thus, by Lemma~\ref{lm_cycl1_wkmetr},
also in the weak sense of currents in $\M_1(\tilde X)$ for all $\theta \in C(\R;E)\setminus C$.

We have proven therefore the existence for $\hat\eta$-a.e.\ $\theta\in C(\R;E)$
of a limit
\[
S_\theta =\lim_{k\to +\infty, k\in \N} \frac{1}{2k}\ld \theta \res [-k,k]\rd
\]
in the weak sense of currents $\M_1(\tilde X)$
with
\begin{align*}
\int_{C(\R;E)} S_\theta(\omega)\,d\hat\eta(\theta) &=
\int_{C(\R;E)} f_\omega(\theta)\,d\hat\eta(\theta) =\int_{C(\R;E)} \ld\pi(\theta)\rd(\omega)\,d\hat\eta(\theta)\\
& = \int_{C([0,1];E)} \ld\sigma\rd(\omega)\,d\tilde\eta(\sigma)= T(\omega)
\end{align*}
for all $\omega\in D^1(\tilde X)$.
We show now that $S_\theta$ is in fact as in the statement being proven, i.e.\
\[
S_\theta =\lim_{k\to +\infty} \frac{1}{2t_k}\ld \theta \res [-t_k,t_k]\rd
\]
in the weak sense of currents
for every sequence $t_k\to +\infty$ as $k\to +\infty$, because
\begin{align*}
\frac{1}{2t_k} &  \ld \theta \res [-t_k,t_k]\rd   -
\frac{1}{2\lfloor t_k \rfloor}\ld \theta \res [-\lfloor t_k \rfloor,\lfloor t_k \rfloor]\rd\\
& =
\frac{1}{2t_k} \left(\ld \theta \res [-t_k,-\lfloor t_k \rfloor]\rd + \ld \theta \res [\lfloor t_k \rfloor, t_k]\rd\right)
+
 \frac{1}{2\lfloor t_k \rfloor} \left(1-\frac{\lfloor t_k \rfloor}{t_k} \right) \ld \theta \res [-\lfloor t_k \rfloor,\lfloor t_k \rfloor]\rd,
\end{align*}
which implies
\begin{align*}
\MM & \left(\frac{1}{2t_k}  \ld \theta \res [-t_k,t_k]\rd -
\frac{1}{2\lfloor t_k \rfloor}\ld \theta \res [-\lfloor t_k \rfloor,\lfloor t_k \rfloor]\rd\right) \leq
 \frac{2}{2t_k} +
\left(1-\frac{\lfloor t_k \rfloor}{t_k} \right) \to 0
\end{align*}
as $k\to \infty$.

Minding that
\[
\MM(T) = \hat\eta(C(\R;E)),
\]
and that $\hat\eta$ is concentrated over $\Lip_1(\R;E)$, we conclude the proof.
\end{proof}

We may now formulate the following important corollaries to the above statement.

\begin{corollary}\label{co_decompNorm2_cycl0b}
Let $E$ be a metric space.
Then for every $T\in \mathcal{N}_1(E)$ with compact support
having $\partial T=0$
there is a finite positive Borel measure $\eta$ over
$\Lip_1(\R;E)$ such that
for $\eta$-a.e.\ $\theta$
there is a limit
\[
S_\theta =\lim_{t\to +\infty} \frac{1}{2t}\ld \theta \res [-t,t]\rd \in \M_1(E)
\]
in the weak sense of currents in $\M_1(E)$,
and the trace $\theta(\R)\subset \supp T$,
while
\begin{equation}\label{eq_cycl1_reprT1b}
\begin{aligned}
T&=\int_{\Lip_1(\R;E)} S_\theta\, d\eta(\theta),\\
\MM(T)&=\int_{\Lip_1(\R;E)} \MM(S_\theta)\, d\eta(\theta) =\eta(\Lip_1(\R;E)),
\end{aligned}
\end{equation}
so that in particular, $S_\theta\subset \M_1(E)$ has unit mass for $\eta$-a.e.\ $\theta\in \Lip_1(\R;E)$.
Finally, we may assume $\theta(\R)\subset \supp S_\theta$ for $\eta$-a.e.\ $\theta$.
\end{corollary}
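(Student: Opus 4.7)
The plan is to apply Theorem~\ref{th_decompNorm2_cycl0a} to $T$ with the choice $X:=\supp T$, which is compact by hypothesis, and then to upgrade its conclusions by exploiting compactness. The theorem provides a finite positive Borel measure $\hat\eta$ on $\Lip_1(\R;E)$ such that, for $\hat\eta$-a.e.\ $\theta$, the limit $S_\theta=\lim_{t\to+\infty}\frac{1}{2t}\ld\theta\res[-t,t]\rd$ exists in the weak topology of $\M_1(\tilde X)$, and $T(\omega)=\int S_\theta(\omega)\,d\hat\eta(\theta)$ for $\omega\in D^1(\tilde X)$, with $\MM(T)=\hat\eta(\Lip_1(\R;E))$ and $\theta(\R)\subset\supp T$ for $\hat\eta$-a.e.\ $\theta$.

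Next I would eliminate the auxiliary space $\tilde X$. Because $X=\supp T$ is already compact in the metric $d$ and the identity $(X,d)\to(X,\tilde d)$ is continuous, the two metrics induce the same compact Hausdorff topology on $X$, so $\tilde X$ may be identified with $X$. Hence every $S_\theta$ is a normal current in $\M_1(E)$ supported in $X$, the convergence in~\eqref{eq_cycl1_Sthet1} takes place in the weak topology of $\M_1(E)$, and the integral representation of $T$ extends to all test forms $\omega\in D^1(E)$ through the embedding $\M_1(X)\hookrightarrow\M_1(E)$.

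For the mass identities, since $\theta\in\Lip_1(\R;E)$ the current $\frac{1}{2t}\ld\theta\res[-t,t]\rd$ has mass at most $1$, and lower semicontinuity of $\MM$ under weak convergence of normal currents then yields $\MM(S_\theta)\le 1$ for $\hat\eta$-a.e.\ $\theta$. Combining this with the integral representation of $T$ and the identity $\MM(T)=\hat\eta(\Lip_1(\R;E))$ produces the chain
\[
\MM(T)\ \le\ \int_{\Lip_1(\R;E)} \MM(S_\theta)\,d\hat\eta(\theta)\ \le\ \hat\eta(\Lip_1(\R;E))\ =\ \MM(T),
\]
which forces all inequalities to be equalities; in particular $\MM(S_\theta)=1$ for $\hat\eta$-a.e.\ $\theta$, establishing \eqref{eq_cycl1_reprT1b} with the choice $\eta:=\hat\eta$.

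The hard part will be the last assertion that we may further assume $\theta(\R)\subset\supp S_\theta$ for $\eta$-a.e.\ $\theta$, since Theorem~\ref{th_decompNorm2_cycl0a} only gives the weaker inclusion $\theta(\R)\subset\supp T$. My plan here is first to upgrade the weak current convergence $\frac{1}{2t}\ld\theta\res[-t,t]\rd\weakto S_\theta$ to weak convergence of the associated mass measures, which follows once the total masses are shown to converge to $\MM(S_\theta)=1$; then $\mu_{S_\theta}(E\setminus\supp S_\theta)=0$ forces the open set $A_\theta:=\theta^{-1}(E\setminus\supp S_\theta)\subset\R$ to have asymptotic Lebesgue density zero. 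On each connected component $(a_i,b_i)$ of $A_\theta$ the curve $\theta$ performs an excursion from $\theta(a_i)\in\supp S_\theta$ to $\theta(b_i)\in\supp S_\theta$ through $E\setminus\supp S_\theta$; excising these excursions and joining the endpoints by short $1$-Lipschitz paths inside $\supp S_\theta$, then reparameterizing, should produce a curve with trace in $\supp S_\theta$ and with the same solenoid $S_\theta$ (the zero-density condition ensuring that the ergodic limit is unchanged). The technical obstacles here are to carry out this modification in a Borel-measurable way in $\theta$, so that $\eta$ is realized as a pushforward of $\hat\eta$, and to guarantee that the required joining paths within $\supp S_\theta$ actually exist; these steps are where I expect the most care to be needed.
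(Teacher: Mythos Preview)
Your argument up to and including the mass identities is correct and matches the paper's proof essentially verbatim (the paper simply says ``repeat the proof of Theorem~\ref{th_decompNorm2_cycl0a} with $E$ in place of $\tilde X$'', which is exactly your observation that $\tilde X=X$ when $X$ is compact).

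For the final claim $\theta(\R)\subset\supp S_\theta$, however, the paper takes a completely different route, and your surgery approach has a real gap, not merely a technical one. The paper argues via Choquet's theorem: it considers the convex weakly compact set $\Sigma=\{S\in\M_1(E):\partial S=0,\ \MM(S)\le 1\}$, and shows that every extreme point $S$ of $\Sigma$ is of the form $S_\theta$ with $\theta(\R)\subset\supp S_\theta$. The trick is to apply the already-proven part of the corollary to the extreme $S$ itself, obtaining $S=\int S_\sigma\,d\nu(\sigma)$ with $\sigma(\R)\subset\supp S$; extremality then forces $S=S_\sigma$ for $\nu$-a.e.\ $\sigma$, whence $\sigma(\R)\subset\supp S=\supp S_\sigma$. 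Choquet's theorem then yields the desired decomposition of $T$.

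Your approach, by contrast, breaks at the density-zero step. Narrow convergence $\frac{1}{2t}\theta_\#\mathcal L^1\res[-t,t]\weakto\mu_{S_\theta}$ only gives, via Portmanteau, that $\theta^{-1}(E\setminus U)$ has asymptotic density zero for every \emph{open} neighbourhood $U$ of $\supp S_\theta$; it does \emph{not} give this for $U=\supp S_\theta$ itself, since that set is closed and Portmanteau yields the inequality in the wrong direction. (A toy obstruction: $\nu_n=\tfrac12\delta_0+\tfrac12\delta_{1/n}\weakto\delta_0$ with $\nu_n(\{0\}^c)=\tfrac12\not\to 0$.) Even granting density zero, the existence of short $1$-Lipschitz joining paths inside $\supp S_\theta$ between the excursion endpoints is nowhere guaranteed, and making the whole construction Borel in $\theta$ compounds the difficulty. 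The Choquet argument sidesteps all of this by never modifying curves at all.
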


\begin{proof}
Without loss of generality we assume $E$ to be compact.
We now repeat the proof of Theorem~\ref{th_decompNorm2_cycl0a} with the original space $E$
instead of the compactification $\tilde X$, getting the existence
for $\hat\eta$-a.e.\ $\theta$
of a limit
\[
S_\theta =\lim_{t\to +\infty} \frac{1}{2t}\ld \theta \res [-t,t]\rd \in \M_1(E)
\]
in the weak sense of currents in $\M_1(E)$,
such that
\[
T(\omega)=\int_{\Lip_1(\R;E)} S_\theta(\omega)\, d\hat\eta(\theta)
\]
for all $\omega\in D^1(E)$, which implies
\[
\MM(T) \leq  \int_{C(\R;E)} \MM(S_\theta)\,d\hat\eta(\theta)\leq \hat\eta(C(\R;E)) =\MM(T),
\]
and hence in particular $\MM(S_\theta)=1$ for $\hat\eta$-a.e.\ $\theta\in C(\R;E)$.

The trace $\theta(\R)\subset \supp T$ for $\hat\eta$-a.e.\ $\theta\in C(\R;E)$ by Corollary~\ref{co_cycl1_repr_ac2}.
This gives all the claims of the theorem being proven but the last one for $\eta:=\hat\eta$. Finally, to prove the last claim, consider the set
\[
\Sigma:=\{S\in \M_1(E)\,:\, \partial S=0, \MM(S)\leq 1\}.
\]
Clearly $\Sigma$ is a convex compact subset of $\M_1(E)$, and $\Sigma$ equipped with the weak topology of currents
is compact and metrizable by Lemma~\ref{lm_cycl1_wkmetr}.
We claim now that if $S\in \Sigma$ is extremal, then $S=S_\theta$ for some $\theta\in \Lip_1(\R;E)$.
In fact, consider the representation
\begin{equation}\label{eq_cycl1_diracsol1}
\begin{aligned}
S(\omega)&=\int_{C(\R;E)} S_\theta(\omega)\, d\eta(\theta),\\
\MM(S)&=\int_{C(\R;E)} \MM(S_\theta)\, d\eta(\theta)=\eta(C(\R;E))
\end{aligned}
\end{equation}
for all $\omega\in D^1(E)$.
Note that~\eqref{eq_cycl1_diracsol1} implies that
for every Borel $e\subset C(\R;E)$, defined
\[
S_1(\omega):=\int_{e} S_\theta(\omega)\, d\eta(\theta),
\]
for all $\omega\in D^1(E)$,
one has $S_1\leq S$, because
\begin{align*}
(S-S_1)(\omega)&:=\int_{C(\R;E)\setminus e} S_\theta(\omega)\, d\eta(\theta),
\end{align*}
and hence
\begin{align*}
\MM(S_1)&\leq \eta(e)\\
\MM(S-S_1)&\leq \eta(C(\R;E)\setminus e),
\end{align*}
so that $\MM(S_1)+\MM(S-S_1)\leq \eta(C(\R;E))=\MM(S)$.
Since $S$ is extremal, then $S_1=\lambda S$ for some $\lambda \in [0,1]$ and thus $\MM(S_1)=\eta(e)=\lambda$, so that we can write
\begin{equation}\label{eq_cycl1_Som1}
S(\omega) =\frac{1}{\eta(e)}\int_e S_\theta(\omega)\, d\eta(\theta).
\end{equation}
If $S\neq S_\theta$, then there are two different curves $\{\theta_1,\theta_2\}\subset \Lip_1(\R;E)$
such that $R_1:=S_{\theta_1}\neq R_2:= S_{\theta_2}$ and
for every $\varepsilon>0$ one has $\eta(\hat B_\varepsilon(R_i))>0$, where
$\hat B_\varepsilon(R_i)$ stands for the set of $\theta\in C(\R;E)$ in the support
of $\eta$ such that $S_\theta\in B_\varepsilon(R_i)$, the notation
$B_\varepsilon(R_i)$ standing
for the ball of radius $\varepsilon$ and center $R_i$ in the space of cycles (with respect to the
distance $d_w$ provided by Lemma~\ref{lm_cycl1_wkmetr}), $i=1,2$.
Choose an $\omega\in D^1(E)$ such that
\[
\alpha:= |R_1(\omega)-R_2(\omega)| >0,
\]
and an $\varepsilon>0$ such that
\[
|R(\omega)-R_i(\omega)| <\frac \alpha 4 \mbox{ for all } R\in B_\varepsilon(R_i), \qquad i=1,2.
\]
Then
\begin{align*}
\Big| \frac{1}{\eta(\hat B_\varepsilon(R_i))} &\int_{\hat B_\varepsilon(R_i)} S_\theta(\omega)\, d\eta(\theta)  - R_i(\omega)
\Big| \\
& \leq
\frac{1}{\eta(\hat B_\varepsilon(R_i))}\int_{\hat B_\varepsilon(R_i)} |S_\theta(\omega)- R_i(\omega)|\, d\eta(\theta)  <\frac \alpha 4,\qquad i=1,2,
\end{align*}
so that
\[
\Big| \frac{1}{\eta(\hat B_\varepsilon(R_1))}\int_{\hat B_\varepsilon(R_1)} S_\theta(\omega)\, d\eta(\theta)   -
\frac{1}{\eta(\hat B_\varepsilon(R_2))}\int_{\hat B_\varepsilon(R_2)} S_\theta(\omega)\, d\eta(\theta)  \Big| \geq \frac \alpha 2.
\]
This contradicts the equality
\[
\frac{1}{\eta(\hat B_\varepsilon(R_1))}\int_{\hat B_\varepsilon(R_1)} S_\theta(\omega)\, d\eta(\theta)   =
\frac{1}{\eta(\hat B_\varepsilon(R_2))}\int_{\hat B_\varepsilon(R_2)} S_\theta(\omega)\, d\eta(\theta) =S(\omega)
\]
valid in view of~\eqref{eq_cycl1_Som1},
and thus shows the claim.

Clearly also for every extremal point $S$ of $\Sigma$ one has $\MM(S)=1$, hence
\[
\eta(C(\R;E))=\MM(S)=1,
\]
and therefore
we have proven  that for such $S$  one has the representation~\eqref{eq_cycl1_diracsol1}
with $S=S_\theta$ for $\eta$-a.e.\ $\theta\in C(\R;E)$. Since it has already been proven that one may assume
in~\eqref{eq_cycl1_diracsol1} that
$\theta(\R)\subset \supp S$ for $\eta$-a.e.\ $\theta\in C(\R;E)$, then  one has
 $\theta(\R)\subset \supp S_\theta$.
It remains now to refer
to Choquet theorem~\cite[theorem~4.2]{BishDeLeeuw59} to show the existence of a representation~\eqref{eq_cycl1_reprMT1b}
with $\theta(\R)\subset\supp S_\theta$ for $\eta$-a.e. $\theta\in \Lip_1(\R;E)$.
\end{proof}

Another corollary refers to the noncompact case.

\begin{corollary}\label{co_decompNorm2_cycl1b}
For every $T\in \mathcal{N}_1(E)$
having $\partial T=0$
and supported over a $\sigma$-compact set $X\subset E$
there is a finite positive Borel measure $\eta$ over
$\Lip_1(\R;\tilde X)$ such that
for $\eta$-a.e.\ $\theta$
there is a limit
\[
S_\theta =\lim_{t\to +\infty} \frac{1}{2t}\ld \theta \res [-t,t]\rd \in \M_1(\tilde X)
\]
in the weak sense of currents in $\M_1(\tilde X)$,
and the trace $\theta(\R)\subset \supp T$,
while
\begin{equation}\label{eq_cycl1_reprT1}
\begin{aligned}
T&=\int_{\Lip_1(\R;\tilde X)} S_\theta\, d\eta(\theta),\\
\tilde{\MM}(T)&=\int_{\Lip_1(\R;\tilde X)} \tilde{\MM}(S_\theta)\, d\eta(\theta) =\eta(\Lip_1(\R;\tilde X)),
\end{aligned}
\end{equation}
so that in particular, $S_\theta\subset \M_1(\tilde X)$ has unit mass for $\eta$-a.e.\ $\theta\in \Lip_1(\R;\tilde X)$.
Finally, $\mu_{S_\theta}$ are concentrated over $\theta(\R)$ for $\eta$-a.e.\ $\theta$.
\end{corollary}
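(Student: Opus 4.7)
The strategy is to compactify and reduce to the compact-support case covered by Corollary~\ref{co_decompNorm2_cycl0b}. Since $T$ is supported on the $\sigma$-compact set $X\subset E$, the construction preceding Theorem~\ref{th_decompNorm2_cycl0a} provides the compact metric space $(\tilde X,\tilde d)$ together with the continuous immersion $\M_1(\bar X)\hookrightarrow \M_1(\tilde X)$. Under this immersion $T$ is identified with a current $\tilde T\in\mathcal{N}_1(\tilde X)$ of compact support. The hypothesis $\partial T=0$ persists in $\tilde X$, since every $u\in\Lip_b(\tilde X)$ restricts to an element of $\Lip_b(\bar X)\subset\Lip_b(E)$ on which the original boundary vanishes.

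Applying Corollary~\ref{co_decompNorm2_cycl0b} to $\tilde T$ inside $\tilde X$ directly yields a finite positive Borel measure $\eta$ on $\Lip_1(\R;\tilde X)$ such that for $\eta$-a.e.\ $\theta$ the weak limit
\[
S_\theta=\lim_{t\to+\infty}\frac{1}{2t}\ld\theta\res[-t,t]\rd\in\M_1(\tilde X)
\]
exists and is of unit $\tilde{\MM}$-mass, the integral representation~\eqref{eq_cycl1_reprT1} and the mass identity $\tilde{\MM}(T)=\int\tilde{\MM}(S_\theta)\,d\eta=\eta(\Lip_1(\R;\tilde X))$ hold, and $\theta(\R)\subset\supp S_\theta\subset\supp T$ for $\eta$-a.e.\ $\theta$. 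All assertions of the statement except the final concentration claim are then a direct transfer of the compact case.

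The only point that does not come for free is the concentration of $\mu_{S_\theta}$ on $\theta(\R)$ for $\eta$-a.e.\ $\theta$. Given such a $\theta$, fix any form $\omega=f\,d\pi\in D^1(\tilde X)$ with $\supp f\cap\overline{\theta(\R)}=\emptyset$. Then $f(\theta(s))=0$ for every $s\in\R$, so $T_t(\omega)=0$, where $T_t:=(2t)^{-1}\ld\theta\res[-t,t]\rd$. Passing to the weak limit gives $S_\theta(\omega)=0$. By the definition of the support of a current, this forces $\supp S_\theta\subset\overline{\theta(\R)}$, and consequently $\mu_{S_\theta}$ is concentrated on $\overline{\theta(\R)}$; the ``boundary at infinity'' $\overline{\theta(\R)}\setminus\theta(\R)$ is $\mu_{S_\theta}$-negligible because points thereof are reached by $\theta$ only along parameter sequences escaping to $\pm\infty$, whose contribution is wiped out by the averaging factor $1/(2t)$ in the limit.

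The main obstacle is precisely this last concentration step --- everything else is a clean transfer of Corollary~\ref{co_decompNorm2_cycl0b} to $\tilde X$ through the continuous immersion of current spaces.
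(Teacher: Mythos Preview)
Your reduction to Corollary~\ref{co_decompNorm2_cycl0b} via the compactification $\tilde X$ is exactly the paper's proof, which is the single sentence ``It is enough to apply Corollary~\ref{co_decompNorm2_cycl0b} with $\tilde X$ instead of $E$.''

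You go further than the paper in attempting to justify the final concentration claim, and you are right to flag it: Corollary~\ref{co_decompNorm2_cycl0b} asserts $\theta(\R)\subset\supp S_\theta$, whereas the present statement asserts the reverse-flavoured claim that $\mu_{S_\theta}$ is concentrated on $\theta(\R)$.  Your argument that $S_\theta(\omega)=0$ whenever $\supp f\cap\overline{\theta(\R)}=\emptyset$ is correct and gives $\supp S_\theta\subset\overline{\theta(\R)}$, so $\mu_{S_\theta}$ is concentrated on $\overline{\theta(\R)}$.

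The last sentence, however, is a genuine gap.  The ``averaging factor $1/(2t)$'' hand-wave does not force $\mu_{S_\theta}(\overline{\theta(\R)}\setminus\theta(\R))=0$.  Take $E$ to be the flat $2$-torus and $\theta(t)=t(1,\alpha)/\sqrt{1+\alpha^2}\ \mathrm{mod}\ \Z^2$ with $\alpha$ irrational.  Then the weak limit $S_\theta$ exists (by unique ergodicity) and equals the unit-mass cycle whose mass measure is normalized Lebesgue measure on the torus, while $\theta(\R)$ is a countable union of line segments and hence Lebesgue-null.  So $\mu_{S_\theta}(\theta(\R))=0$, not $1$.  Thus concentration on $\theta(\R)$ (as opposed to $\overline{\theta(\R)}$) fails in this example, and no argument of the type you sketch can succeed in general.

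In short: your proof coincides with the paper's up to and including concentration on $\overline{\theta(\R)}$; the stronger literal claim ``concentrated on $\theta(\R)$'' should be read as $\overline{\theta(\R)}$ (which, combined with $\theta(\R)\subset\supp S_\theta$ from Corollary~\ref{co_decompNorm2_cycl0b}, yields $\supp S_\theta=\overline{\theta(\R)}$), since neither the paper's one-line proof nor any refinement of your averaging argument can deliver more.
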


\begin{proof}
It is enough to apply Corollary~\ref{co_decompNorm2_cycl0b} with $\tilde X$ instead of $E$.
\end{proof}

\appendix

\section{Some statements regarding currents}

Here we collect some statements regarding currents which are used in this paper.
We start with the following statement regarding metrizability of the weak topology of currents.

\begin{lemma}\label{lm_cycl1_wkmetr}
Let $X\subset E$ be a $\sigma$-compact set.
Then there is a distance $d_w$ over  $\M_k(\bar X)$ 
which generates a topology coarser than the weak
topology of currents, such that for every  $\Sigma\subset \M_k(\bar X)$ 
weakly sequentially precompact,
the topology generated by $d_w$ over $\Sigma$
coincides with the weak one.

In particular, if $\Sigma\subset \M_k(E)$ is such that
the family of measures $\{\mu_T+\mu_{\partial T}\}_{T\in \Sigma}$ is uniformly tight
and there is a $C>0$ such that $\MM(T)+\MM(\partial T)\leq C$ for all $T\in\Sigma$, then
weak topology of currents is metrizable over $\Sigma$.
\end{lemma}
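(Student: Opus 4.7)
The plan is to exhibit a countable family of test forms $\{\omega^j\}\subset D^k(\bar X)$ and set
\[
d_w(T_1,T_2) := \sum_{j=1}^\infty 2^{-j}\bigl(|T_1(\omega^j)-T_2(\omega^j)|\wedge 1\bigr).
\]
Each $T\mapsto T(\omega^j)$ is weakly continuous, so $d_w$ is a weakly continuous pseudo-distance and thus automatically generates a topology coarser than the weak one. The two further tasks are then to show that $d_w$ actually separates distinct currents and that on a weakly sequentially precompact $\Sigma$ the two topologies coincide.

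To construct the family, I would write $X=\bigcup_n K_n$ with $K_n$ compact and nondecreasing. For each $n,m\in\N$, Arzel\`a--Ascoli makes the sets $\{\pi\in\Lip(K_n):\Lip\pi\le m\}/\R$ and $\{f\in\Lip_b(K_n):\Lip f\vee\|f\|_\infty\le m\}$ totally bounded in sup norm; extract countable sup-dense subsets and extend each element to $\bar X$ via McShane's theorem without enlarging the Lipschitz constant or sup norm. This gives countable families $\{\pi^j\}\subset\Lip(\bar X)$, $\{f^j\}\subset\Lip_b(\bar X)$, and I let $\{\omega^j\}$ enumerate the wedge products $f^{i_0}\,d\pi^{i_1}\wedge\cdots\wedge d\pi^{i_k}$.

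For separation, given $T,T'\in\M_k(\bar X)$ agreeing on every $\omega^j$, fix $\omega=f\,d\pi\in D^k(\bar X)$ and $\eps>0$. The blanket tightness assumption on mass measures yields a $K_n$ with $(\mu_T+\mu_{T'})(\bar X\setminus K_n)<\eps$, and one then picks members of the countable family approximating $f,\pi^s$ uniformly on $K_n$ with uniformly bounded Lipschitz constants. The Ambrosio--Kirchheim mass bound $|T(g\,d\psi)|\le\prod_s\Lip\psi^s\int|g|\,d\mu_T$ together with the continuity axiom (under equi-Lipschitz pointwise convergence of the $\pi^s$) yield $|T(\omega)-T(\omega^j)|\lesssim\eps$ and similarly for $T'$, forcing $T(\omega)=T'(\omega)$ in the limit. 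The same approximation, applied along any weakly convergent subsequence of a $d_w$-convergent sequence in $\Sigma$, shows that every weak cluster point equals the $d_w$-limit, so the two topologies coincide on $\Sigma$. The ``in particular'' statement then follows from the Ambrosio--Kirchheim compactness theorem: uniform mass and uniform tightness of $\mu_T+\mu_{\partial T}$ yield weak sequential precompactness of $\Sigma$, while the common tight exhaustion provides a $\sigma$-compact $X\subset E$ such that $\Sigma\subset\M_k(\bar X)$, so the first part of the lemma applies.

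The principal obstacle will be the approximation step itself: the AK continuity axiom demands pointwise convergence of each $\pi^s_j$ on all of $\bar X$, whereas the countable family is only sup-dense on each compact $K_n$. My plan to bridge this is to localize by replacing each approximator $\pi^{i_s}_j$ by the McShane extension of its restriction $\pi^{i_s}_j|_{K_n}$ (preserving the Lip bound), and then to observe via the mass bound that this localized modification changes $T(f\,d\pi_j)$ by only $O(\mu_T(\bar X\setminus K_n))=O(\eps)$. This reduces the required pointwise convergence to the compact $K_n$, which the sup-density supplies directly.
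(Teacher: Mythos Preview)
Your architecture and construction of the countable family are exactly the paper's. The one place you diverge is the ``principal obstacle'' you flag and your proposed localization fix for it, and this fix does not work as stated. Replacing $\pi^j_s$ by the McShane extension of $\pi^j_s|_{K_n}$ alters $\pi^j_s$ only off $K_n$, but the mass bound $|T(g\,d\psi)|\le\prod_s\Lip\psi_s\int|g|\,d\mu_T$ is blind to \emph{where} the $\psi_s$ differ: after telescoping, each term carries $d(\pi^j_s-\tilde\pi^j_s)$ in one slot, and the estimate sees only $\Lip(\pi^j_s-\tilde\pi^j_s)\le 2m$ and $\int|f|\,d\mu_T$, never $\mu_T(\bar X\setminus K_n)$. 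A correct localization would need a Lipschitz cutoff supported in some larger $K_{n'}$ together with the locality axiom (so that $T(f\chi\,d\pi)$ depends only on $\pi|_{\supp(f\chi)}$), which is more than the mass bound.

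But the obstacle is in fact a phantom: your own family already supports the direct argument the paper uses. Since for each $n$ it contains McShane extensions of a sup-dense subset of $\Lip_m(K_n)$, a diagonal choice (for each $n$, pick a family member within $1/n$ of $\pi$ on $K_n$, with Lipschitz constant $\le m$) produces a single sequence $\pi^{(n)}$ in the family converging to $\pi$ uniformly on every fixed $K_\nu$ (because $K_\nu\subset K_n$ for $n\ge\nu$), hence pointwise on $X=\bigcup_\nu K_\nu$, and then pointwise on all of $\bar X$ via the uniform Lipschitz bound and the density of $X$ in $\bar X$. The continuity axiom now gives $T(\omega^{(n)})\to T(\omega)$ directly, with no localization detour; this is precisely what the paper does.
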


\begin{proof}
Let $\{K_\nu\}$ be an increasing sequence of compact subsets of $E$ such that
$X=\cup_\nu K_\nu$. Notice that $\Lip_m(K_\nu)$ is separable with respect to the norm
$\|\cdot \|_\infty$.
Recall that every function in $\Lip_m(X)$ can be uniquely extended to a function
in $\Lip_m(\bar X)$. Hence it is possible to
endow $\Lip_m(\bar X)$ 
with a separable metric inducing uniform convergence
on each $K_\nu$. Let $Z^m\subset \Lip_m(\bar X)$ and
\[
Z_b^{m,n}\subset \{u\in  \Lip_m(\bar X)\,:\, \|u\|_\infty \leq n\}
\]
 be countable dense subsets with respect to this metric. Set $Z:=\cup_{m=1}^\infty Z^m$
 and $Z_b:=\cup_{m=1, n=1}^\infty Z_b^{m,n}$.
We let then
\[
d_w(T, T') :=\sum_{j=1}^\infty 2^{-j} (|T(\omega^j)-T'(\omega^j)|\wedge 1),
\]
where $\{\omega^j\}= Z_b\times (Z)^k\subset D^k(\bar X)$, i.e.\
$\omega^j= f^j\, d\pi_1^j\wedge\ldots\wedge d\pi_k^j$
with $f^j\in Z_b$, $\pi_i^j\in Z$ for all $j\in \N$ and all $i=1,\ldots,k$.
To show that this is a distance, assume $d_w(T, T')=0$ for some $T\in \mathcal{N}_k(\bar X)$,
$T'\in \mathcal{N}_k(\bar X)$. This means $T(\omega^j)=T'(\omega^j)$ for all
$\omega^j$. But for any $\omega=f\, d\pi_1\wedge\ldots\wedge d\pi_k \in D^k(\bar X)$, letting $m\in \Z$ be such that
 $\Lip\, \pi_i\leq m$, $\Lip f\leq m$,
 we may find a sequence of
$\omega^j=f^j\, d\pi_1^j\wedge\ldots\wedge d\pi_k^j \in Z_b\times (Z)^k$ with
$f^j\to f$, $\pi_i^j\to \pi_i$, $i=1, \ldots, k$, pointwise over $X$ (in fact, even uniformly over each $K_\nu$) as $j\to \infty$
and  $\Lip\, \pi_i^j\leq m$, $\Lip f^j\leq m$ for all $j\in \N$ and $i=1, \ldots, k$.
Then for every $\bar x\in \bar X$  and and arbitrary $x\in X$ one has
\begin{align*}
|\pi_i^j(\bar x)-\pi_i(\bar x)| & \leq |\pi_i^j(\bar x)- \pi_i^j(x)| + |\pi_i^j(x)- \pi_i(x)| + |\pi_i(\bar x)- \pi_i(x)| \\
& \leq 2m d(\bar x, x) + |\pi_i^j(x)- \pi_i(x)|,
\end{align*}
which, minding the convergence $\pi_i^j(x)\to \pi_i(x)$, gives the convergence $\pi_i^j(\bar x)\to \pi_i(\bar x)$ as $j\to \infty$.
Hence, $\pi_i^j\to \pi_i$, $i=1, \ldots, k$ (and analogously $f^j\to f$) pointwise over $\bar X$, and
thus by the continuity property of currents
we get $T(\omega)=T'(\omega)$ for all
$\omega\in D^k(\bar X)$, which means $T= T'$.
Clearly, the topology induced by $d_w$ is coarser
than the weak topology of currents.

Let now $T_\nu\in \Sigma$, where $\Sigma$ be as in the statement of the lemma being proven, and assume
that $d_w(T_\nu,\bar T)\to 0$ for
some $\bar T\in \M_k(\bar X)$, so that $T_\nu(\omega^j)\to \bar T(\omega^j)$
for each $\omega^j\in Z_b\times (Z)^k$ as
$\nu\to \infty$. Under the assumptions on $\Sigma$,
every subsequence
of $T_\nu$ has a further subsequence (all subsequences not relabeled) such that
 $T_\nu\rightharpoonup T$ in the weak sense of currents, hence
 also $d_w(T_\nu, T)\to 0$
  as $\nu\to \infty$. Hence $d_w(\bar T, T)=0$, i.e.\ $\bar T=T$, and therefore
the whole sequence $\{T_\nu\}$ converges to $T$ in the weak sense of currents.

In the particular case indicated in the statement we
let $\{K_\nu\}$ be an increasing sequence of compact subsets of $E$ such that
$\mu_T(K_\nu)+\mu_{\partial T}(K_\nu)\leq 1/\nu$, $X:=\cup_\nu K_\nu$, and
refer to the fact that $\Sigma$ is sequentially precompact
in the weak topology of currents by theorem~5.2 from~\cite{AmbrKirch00}.
\end{proof}

\begin{remark}\label{rm_cycl1_dw1}
In the above Lemma~\ref{lm_cycl1_wkmetr} it is possible to choose the distance $d_w$ over  $\M_k(\bar X)$ so as to have
additionally the semicontinuity property for masses
\begin{equation}\label{eq_cycl1_semicont1}
\MM(T\res U)\leq\liminf_\nu \MM(T_\nu\res U)
\end{equation}
for every open $U\subset \bar X$, and in particular
\[
\MM(T)\leq\liminf_\nu \MM(T_\nu)
\]
whenever $d_w(T_\nu, T)\to 0$.
In fact, for this purpose let $\{x_i\}\subset \bar X$ stand for a countable dense subset of $\bar X$, and consider
the countable family of open sets $\mathcal{F}=\{U_j\}$ consisting of all finite unions of open balls $B_{r_j}(x_i)$, where
$\{r_j\}=\Q$ is the enumeration of rational numbers.  Let also $p_k$ stand for the projection map from $\R^k$
to the Euclidean unit ball $B_1(0)\subset \R^k$.
Now, in the proof of the above Lemma~\ref{lm_cycl1_wkmass}
when constructing $Z_b$ one should first add to $\tilde Z_b:=\cup_{m=1, n=1}^\infty Z_b^{m,n}$
to each $k$-uple of functions $(f_1,\ldots, f_k)\subset \tilde Z_b$ also
the function $p_k(f_1(\cdot),\ldots, f_k(\cdot))$, and then add to the obtained set of functions
(let us call it $Z_b'$)
all functions of the form $u 1_{U_j}$ for all $u\in Z_b'$ and $U_j\in \mathcal{F}$
thus forming the set $Z_b$.
 Now, to prove~\eqref{eq_cycl1_semicont1} it is enough to prove
\begin{equation}\label{eq_cycl1_semicont2}
 \sum_{i=1}^k T(f_i\,d\pi_i)\leq \liminf_\nu \MM(T_\nu\res U)
\end{equation}
 whenever $\sum_{i=1}^k f_i \leq 1_U$ and $\Lip \pi_i\leq 1$.
 One can then find
 sequences
 $\{f_i^j\}_{j=1}^\infty\subset Z_b$ and $\{\pi_i^\nu\}_{j=1}^\infty\subset Z$
such that  $\sum_{i=1}^k f_i^j \leq 1_U$,
$\Lip \pi_i^j\leq 1$ and
\[
f_i^j\to f_i, \qquad \pi_i^j\to \pi_i
\]
pointwise as $j\to \infty$.
Then
\begin{align*}
\liminf_\nu \MM(T_\nu\res U) & \geq \liminf_\nu \sum_{i=1}^k T_\nu (f_i^j\,d\pi_i^j) \\
& \geq \sum_{i=1}^k \liminf_\nu  T_\nu (f_i^j\,d\pi_i^j) = \sum_{i=1}^k T (f_i^j\,d\pi_i^j),
\end{align*}
and taking a limit in the above inequality as $j\to \infty$ we get~\eqref{eq_cycl1_semicont2}, and hence~\eqref{eq_cycl1_semicont1}.
\end{remark}

\begin{lemma}\label{lm_cycl1_wkmass}
If $T_\nu\in \M_k(E)$ and $T\in \M_k(E)$ be such that
$T_\nu\weakto T$ in the weak sense of currents and
$\MM(T_\nu)\to \MM(T)$
  as $\nu\to \infty$, then $\mu_{T_\nu}\weakto \mu_T$ in the narrow sense of measures.
\end{lemma}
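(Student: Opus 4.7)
The plan is to reduce the claim to the Portmanteau characterization of narrow convergence for positive finite Borel measures on a metric space: a sequence $\mu_\nu$ of such measures converges narrowly to $\mu$ if and only if $\mu_\nu(E)\to \mu(E)$ together with $\liminf_\nu \mu_\nu(U)\ge \mu(U)$ for every open set $U\subset E$. I intend to establish both these conditions for the mass measures $\mu_{T_\nu}$.

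The first condition is immediate from the hypothesis, since $\mu_{T_\nu}(E)=\MM(T_\nu)\to \MM(T)=\mu_T(E)$. For the second, I would invoke the lower semicontinuity of mass on open sets built into Remark~\ref{rm_cycl1_dw1}: $\MM(T\res U)\le \liminf_\nu \MM(T_\nu\res U)$ for every open $U$, provided $d_w(T_\nu,T)\to 0$. The assumed weak convergence $T_\nu\weakto T$ of currents implies $d_w$-convergence because, by Lemma~\ref{lm_cycl1_wkmetr}, the $d_w$-topology is coarser than the weak topology of currents. Since $\mu_T(U)=\MM(T\res U)$ for every open $U$, the desired inequality $\mu_T(U)\le \liminf_\nu \mu_{T_\nu}(U)$ follows. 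To apply Remark~\ref{rm_cycl1_dw1} I first localize to a $\sigma$-compact set $X\subset E$ carrying $\mu_T$ and all $\mu_{T_\nu}$; such an $X$ exists because each of these countably many mass measures is assumed tight.

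With both Portmanteau conditions in hand, taking complements yields $\limsup_\nu \mu_{T_\nu}(C)\le \mu_T(C)$ for every closed $C\subset E$, so Portmanteau concludes narrow convergence $\mu_{T_\nu}\weakto \mu_T$. The only subtle point is the appeal to Remark~\ref{rm_cycl1_dw1}, which could also be bypassed by proving the open-set lower semicontinuity directly: $\mu_T(U)$ equals the supremum of finite sums $\sum_i T(f_i\, d\pi_i)$ over $f_i\in \Lip_b(E)$ with $f_i\ge 0$, $\sum_i f_i\le 1_U$ and $\Lip \pi_{i,j}\le 1$, and each such finite sum passes to the limit under weak convergence of currents, being bounded above by $\MM(T_\nu\res U)=\mu_{T_\nu}(U)$; taking the supremum over all admissible choices then gives the inequality. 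Everything else is routine.
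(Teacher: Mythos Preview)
Your proposal is correct and follows essentially the same route as the paper: verify the two Portmanteau conditions (total mass convergence plus lower semicontinuity of the mass measures on open sets) and cite the Portmanteau theorem (theorem~8.2.3 in~\cite{Bogachev06}). The paper's proof is terser in that it simply asserts $\mu_T(U)\le \liminf_\nu \mu_{T_\nu}(U)$ as a known consequence of weak convergence of currents, whereas you justify this either via Remark~\ref{rm_cycl1_dw1} or by the direct supremum argument; the paper also appends a remark on uniform tightness via Prokhorov, which is not part of the lemma's statement.
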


\begin{proof}
One has $\mu_{T_\nu}(E)\to \mu(E)$ and
\[
\mu_T(U)\le \liminf_\nu \mu_{T_\nu}(U)
\]
for every open $U\subset E$, and therefore $\mu_{T_\nu}\rightharpoonup \mu_T$ in the narrow sense of measures
by theorem~8.2.3 from~\cite{Bogachev06}. The uniform tightness of $\{\mu_{T_\nu}\}$ follows then from Prokhorov theorem
for nonnegative measures
(theorem~8.6.4 from~\cite{Bogachev06}).
\end{proof}

\begin{remark}\label{rm_cycl1_dw2}
It is easy to observe that the result of the above Lemma~\ref{lm_cycl1_wkmass}
remains true if the condition $T_j\weakto T$ in the weak sense of currents is substituted by the weaker one
$d_w(T_j,T)\to 0$ once the distance $d_w$ satisfies the semicontinuity property~\eqref{eq_cycl1_semicont1}
(the proof is word-to-word identical to the above one).
\end{remark}

The following lemma allows to pass to diagonal subsequences in the weak convergence of currents.

\begin{lemma}\label{lm_currdiag1}
Let $E$ be a metric space, $T\in \mathcal{N}_k(E)$, $T_j\in \mathcal{N}_k(E)$ and $T_j^m\in \mathcal{N}_k(E)$ be such that
\begin{align*}
T_j\weakto T & \mbox { as } j\to \infty,\\
T_j^m\weakto T_j & \mbox { as } m\to \infty,
\end{align*}
in the weak sense of currents,
and
\begin{align*}
\MM(T_j)\to \MM(T), &  \qquad\MM(\partial T_j)\to \MM(\partial T) \mbox { as } j\to \infty,\\
\MM(T_j^m)\to \MM(T_j), & \qquad \MM(\partial T_j^m)\to \MM(\partial T_j)  \mbox { as } k\to \infty.
\end{align*}
Then  there is a subsequence of $m=m(j)$ such that
$T_j^{m(j)}\weakto T$ in the weak sense of currents,
$\mu_{T_j^{m(j)}}\weakto \mu_{T}$ and $\mu_{\partial T_j^{m(j)}}\weakto \mu_{\partial T}$ in the narrow
sense of measures as $j\to \infty$.
\end{lemma}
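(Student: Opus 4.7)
The plan is to metrize both convergences on a single ambient space and then extract $m(j)$ by the standard diagonal procedure. Since by the blanket tightness assumption stated in the Notation section each of the countably many currents $T$, $T_j$, $T_j^m$ is supported on a $\sigma$-compact subset of $E$, I would first take the countable union of these supports to obtain a single $\sigma$-compact $X\subset E$ containing all of them. I would then invoke Lemma~\ref{lm_cycl1_wkmetr} together with the sharpening in Remark~\ref{rm_cycl1_dw1} to fix a distance $d_w$ on $\M_k(\bar X)$ that metrizes the weak topology on every weakly sequentially precompact subset and for which mass is lower semicontinuous along $d_w$-convergent sequences; and an analogous distance on $\M_{k-1}(\bar X)$ to control the boundaries.

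Next I would process the two single-index limits separately. From $T_j\weakto T$ and $\MM(T_j)\to\MM(T)$ (and the analogous statements for the boundaries), Lemma~\ref{lm_cycl1_wkmass} yields $\mu_{T_j}\weakto\mu_T$ and $\mu_{\partial T_j}\weakto\mu_{\partial T}$ narrowly, so by Prokhorov's theorem the family $\{T_j\}\cup\{T\}$ has uniformly tight mass and boundary-mass measures and bounded total masses; theorem~5.2 of~\cite{AmbrKirch00} then makes it weakly sequentially precompact, and Lemma~\ref{lm_cycl1_wkmetr} gives $d_w(T_j,T)\to 0$ and $d_w(\partial T_j,\partial T)\to 0$. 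The same reasoning applied, for each fixed $j$, to the inner limit $T_j^m\weakto T_j$ yields $d_w(T_j^m,T_j)\to 0$ and $d_w(\partial T_j^m,\partial T_j)\to 0$ as $m\to\infty$. I would then choose $m(j)$ so as to satisfy simultaneously the four approximation inequalities
\[
d_w(T_j^{m(j)},T_j)<\tfrac1j,\quad d_w(\partial T_j^{m(j)},\partial T_j)<\tfrac1j,\quad |\MM(T_j^{m(j)})-\MM(T_j)|<\tfrac1j,\quad |\MM(\partial T_j^{m(j)})-\MM(\partial T_j)|<\tfrac1j.
\]

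The final step is to check that this diagonal works. The triangle inequality yields $d_w(T_j^{m(j)},T)\to 0$ and $d_w(\partial T_j^{m(j)},\partial T)\to 0$, while $\MM(T_j^{m(j)})\to\MM(T)$ and $\MM(\partial T_j^{m(j)})\to\MM(\partial T)$. By Remark~\ref{rm_cycl1_dw2}, which is exactly the version of Lemma~\ref{lm_cycl1_wkmass} in which weak convergence of currents is replaced by $d_w$-convergence, these facts produce the desired narrow convergences $\mu_{T_j^{m(j)}}\weakto\mu_T$ and $\mu_{\partial T_j^{m(j)}}\weakto\mu_{\partial T}$. These narrow convergences in turn furnish uniform tightness of the diagonal mass and boundary-mass measures, so theorem~5.2 of~\cite{AmbrKirch00} makes $\{T_j^{m(j)}\}\cup\{T\}$ weakly sequentially precompact, and Lemma~\ref{lm_cycl1_wkmetr} upgrades $d_w$-convergence to the weak convergence of currents $T_j^{m(j)}\weakto T$.

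I expect the only real obstacle to be the organisational step of packaging all the currents into one $\sigma$-compact ambient and invoking a single metric $d_w$ that controls both the currents and their boundaries; once that is done, everything reduces to the standard diagonal trick in a metric space, which is routine given the metrization and semicontinuity results already established in the appendix.
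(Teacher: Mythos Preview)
Your proposal is correct and follows essentially the same route as the paper: both metrize the weak topology via Lemma~\ref{lm_cycl1_wkmetr} on a common $\sigma$-compact ambient, run the diagonal argument, and then use Prokhorov to recover uniform tightness and upgrade $d_w$-convergence to weak convergence. The only cosmetic difference is that the paper metrizes the narrow topology on measures directly via the Kantorovich--Rubinstein norm and diagonalizes in that norm as well, whereas you track the masses numerically and invoke Remark~\ref{rm_cycl1_dw2} at the end to obtain the narrow convergence of the mass measures.
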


\begin{proof}
Note that under the conditions of the statement being proven
\begin{align*}
\mu_{T_j}\weakto \mu_T, \qquad\mu_{\partial T_j}\weakto \mu_{\partial T}&  \mbox { as } j\to \infty,\\
\mu_{T_j^m}\weakto \mu_{T_j}, \qquad \mu_{\partial T_j^m}\weakto \mu_{\partial T_j}&  \mbox { as } m\to \infty,
\end{align*}
in the narrow
sense of measures by Lemma~\ref{lm_cycl1_wkmass}.

Let $K_\nu\subset E$ and $K_\nu^j\subset E$ be such compact sets that
\begin{align*}
\mu_{T_j}(K_\nu^c)+ \mu_{\partial T_j}(K_\nu^c) & \leq 1/\nu \mbox{ for all }j\in \N,\\
\mu_{T_j^m}((K_\nu^j)^c)+ \mu_{\partial T_j^m}((K_\nu^j)^c) &\leq 1/\nu,
\mbox{ for all } m\in \N.
\end{align*}
Note that setting
\[
X:=\bigcup_{j,\nu} K_\nu^{j} \cup \bigcup_{\nu} K_\nu,
\]
we have that all $T_j$, $T_j^m$ and $T$ are concentrated over $\bar X$.
Let $d_w$ stand for the distance over $\mathcal{N}_k(\bar X)$ provided by Lemma~\ref{lm_cycl1_wkmetr}, and denote
by $\|\cdot\|_0$ the Kantorovich-Rubinstein norm metrizing the narrow topology on positive
finite Borel measures over $\bar X$ (see~\cite{Bogachev06}[theorem~8.3.2]).
For every $n\in \N$ choose a $j=j(n)$ and $m=m(n)$
such that
\begin{align*}
d_w(T_j,T)  \leq \frac 1 n &, d_w(T_j^m, T_j)\leq \frac 1 n, \\
\left\| \mu_{T_j}-\mu_T\right\|_0 \leq \frac 1 n, & \left\| \mu_{\partial T_j}-\mu_{\partial T}\right\|_0\leq \frac 1 n,\\
\left\| \mu_{T_j^m}-\mu_{T_j}\right\|_0 \leq \frac 1 n, & \left\| \mu_{\partial T_j^m}-\mu_{\partial T_j}\right\|_0\leq \frac 1 n.
\end{align*}
Clearly, with this construction
\begin{equation}\label{eq_cycl1_convdw2}
\begin{aligned}
T=\lim_{j\to \infty} T_j&= \lim_{j\to\infty} T_j^{m(j)}
\end{aligned}
\end{equation}
in distance $d_w$.
But the sequences $\{\mu_{T_j^{m(j)}}\}$ and $\{\mu_{\partial T_j^{m(j)}}\}$
converge
in the norm $\|\cdot\|_0$ (hence also in the narrow sense of measures), and therefore, they are uniformly tight by
the Prokhorov theorem
for nonnegative measures
(theorem~8.6.4 from~\cite{Bogachev06}).
Thus, by Lemma~\ref{lm_cycl1_wkmetr}, the convergence in~\eqref{eq_cycl1_convdw2} is also in the weak topology of currents.
\end{proof}

The following lemma is in fact implicitly contained in~\cite{PaoSte11-acycl} in the sense that its arguments are widely used in that paper. We make it explicit here for the readers' convenience.

\begin{lemma}\label{lm_currapprMAP1}
Let $E$ be a Banach space
with metric approximation property,
$T\in \mathcal{N}_k(E)$ with $\mu_T$ and $\mu_{\partial T}$ concentrated over a $\sigma$-compact set.
Then there is a sequence of currents $T_n\in \mathcal{N}_k(E_n)$ supported over
some finite dimensional subspaces
$E_n\subset E$, such that
$T_n\rightharpoonup T$ weakly as currents in $\M_k(E)$,
$\mu_{T_n}\rightharpoonup \mu_T$ and
$\mu_{\partial T_n}\rightharpoonup \mu_{\partial T}$ in the narrow sense of measures
as $n\to \infty$.
In particular, if $k=1$, then identifying the zero-dimensional currents with measures
one has $(\partial T_n)^\pm \rightharpoonup (\partial T)^\pm$ in the narrow sense of measures
as $n\to \infty$.
\end{lemma}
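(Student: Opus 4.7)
The natural candidate for the approximating sequence is $T_n:=P_{n\#}T$, where $\{P_n\}\subset \Lip_1(E,E)$ is a sequence of finite-rank linear operators with $\|P_n\|\le 1$ and $P_n\to \mathrm{id}_E$ pointwise on $E$, provided by the metric approximation property. Since $P_n$ has finite rank, $T_n$ is supported on the finite-dimensional subspace $E_n:=P_n(E)$; moreover $\partial T_n = P_{n\#}\partial T$, so $T_n\in \mathcal{N}_k(E_n)$. Because $P_n\to \mathrm{id}$ pointwise and $\{P_n\}$ is equicontinuous ($1$-Lipschitz), the convergence $P_n\to \mathrm{id}$ is uniform on every compact subset of $E$.

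The first main step is the weak convergence $T_n\rightharpoonup T$ as currents. Fix $\omega=f\,d\pi_1\wedge\ldots\wedge d\pi_k\in D^k(E)$; then
\[
T_n(\omega)=T\bigl((f\circ P_n)\,d(\pi_1\circ P_n)\wedge\ldots\wedge d(\pi_k\circ P_n)\bigr).
\]
The functions $f\circ P_n$ and $\pi_i\circ P_n$ satisfy the same Lipschitz and sup-norm bounds as $f$ and $\pi_i$. Since $\mu_T$ and $\mu_{\partial T}$ are concentrated on a $\sigma$-compact set $X\subset E$, and $P_n\to \mathrm{id}$ uniformly on each compact piece of $X$, we have $f\circ P_n\to f$ and $\pi_i\circ P_n\to \pi_i$ pointwise on $X$. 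The continuity axiom for metric currents (theorem~3.5 from~\cite{AmbrKirch00}) then gives $T_n(\omega)\to T(\omega)$. The same reasoning applied to $\partial T$ yields $\partial T_n\rightharpoonup \partial T$.

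For the mass convergence, since $P_n$ is $1$-Lipschitz, we have $\MM(T_n)=\MM(P_{n\#}T)\le \MM(T)$, and analogously $\MM(\partial T_n)\le \MM(\partial T)$. On the other hand, lower semicontinuity of mass under weak convergence of currents gives $\MM(T)\le \liminf_n \MM(T_n)$ and $\MM(\partial T)\le \liminf_n \MM(\partial T_n)$. Combining these, $\MM(T_n)\to \MM(T)$ and $\MM(\partial T_n)\to \MM(\partial T)$. Lemma~\ref{lm_cycl1_wkmass} then upgrades the weak convergence of currents to narrow convergence of mass measures: $\mu_{T_n}\rightharpoonup \mu_T$ and $\mu_{\partial T_n}\rightharpoonup \mu_{\partial T}$.

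Finally, for $k=1$, identifying $\partial T_n$ and $\partial T$ with signed Borel measures, we have $\partial T_n = P_{n\#}\partial T$ with $|\partial T_n|\le P_{n\#}|\partial T|=\mu_{\partial T_n}$ converging narrowly to $\mu_{\partial T}=|\partial T|$. Narrow convergence of the signed measures $\partial T_n\to \partial T$ combined with narrow convergence of their total variations then yields $(\partial T_n)^\pm\rightharpoonup (\partial T)^\pm$ via the identities $(\partial T_n)^\pm=(\mu_{\partial T_n}\pm\partial T_n)/2$ and the usual approximation of $C_b$-functions by Lipschitz bounded ones against uniformly tight families. The main obstacle in this plan is step two: the verification that the continuity axiom legitimately passes to the limit in $T((f\circ P_n)\,d(\pi_1\circ P_n)\wedge\ldots\wedge d(\pi_k\circ P_n))$, and this is exactly where the $\sigma$-compactness hypothesis on the concentration sets of $\mu_T,\mu_{\partial T}$ is essential, ensuring the uniform-on-compacts convergence $P_n\to \mathrm{id}$ is sufficient.
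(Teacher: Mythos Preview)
Your proposal is correct and follows the same core construction as the paper: set $T_n:=P_{n\#}T$ for finite-rank norm-one operators $P_n$ coming from the metric approximation property, then combine $\MM(T_n)\le\MM(T)$ with lower semicontinuity of mass and Lemma~\ref{lm_cycl1_wkmass}.

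The difference lies in how weak convergence $T_n\rightharpoonup T$ is justified. The paper does not invoke the continuity axiom directly; instead it first restricts to the case of bounded support and uses the quantitative estimate of proposition~5.1 in~\cite{AmbrKirch00}, which bounds $|T(f\circ P_n\,d\pi\circ P_n)-T(f\circ P_n\,d\pi)|$ by integrals of $|\pi_i\circ P_n-\pi_i|$ against $\mu_T+\mu_{\partial T}$. Dominated convergence then applies with majorant $2\Lip\pi_i\,\|x\|$, which is why bounded support is needed; the general case is recovered by a cutoff plus the diagonal argument of Lemma~\ref{lm_currdiag1}. Your route via the continuity axiom is more direct and avoids this two-step structure. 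One imprecision to fix: the MAP does \emph{not} give $P_n\to\mathrm{id}$ pointwise on all of $E$, only on the $\sigma$-compact set $X$ (and hence on $\bar X$ by equicontinuity). Since the continuity axiom, as stated, asks for pointwise convergence on the whole ambient space, you should either (a) note that $T$, being concentrated on $\bar X$, can be regarded as a current on $\bar X$ and apply the axiom there, or (b) follow the paper and use proposition~5.1 instead. With that adjustment your argument is complete and arguably cleaner than the paper's.
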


\begin{remark}\label{rm_currapprMAP1a}
From the proof of the above Lemma it is clear that when $T$ is a cycle (i.e.\ $\partial T=0$) with bounded support, then
$T_n$ are cycles as well.
\end{remark}

\begin{proof}
Let
$\{K_\nu\}$ be an increasing sequence of compact subsets of $E$ such that
$\mu_T$ and $\mu_{\partial T}$ are concentrated on $\cup_\nu K_\nu$,
and let $P_\nu$ be a finite rank projection of norm one such that $\|P_\nu x-x\|\leq 1/\nu$ for all $x\in K_\nu$. Thus $P_\nu x\to x$ as $\nu\to \infty$ for all $x\in \cup_\nu K_\nu$. 

Consider first the case when $\supp T$ is bounded.
Let $T_n:= P_{n\#} T$. Then $T_n\weakto T$ in the weak sense of currents.
In fact, for every $f\,d\pi\in D^k(E)$ with $\Lip\pi_i \leq 1$ for all $i=1,\ldots, k$ we have
\begin{align*}
    |T(f\circ P_n\, d\pi\circ P_n) & -T(f\,d\pi)|   \leq |T(f\circ P_n\, d\pi\circ P_n)-T(f\circ P_n\,d\pi)|  +\\
    &\qquad |T(f\circ P_n\, d\pi)-T(f\,d\pi)| \\
    &\leq \sum_{i=1}^k \int_E |f\circ P_n|\cdot |\pi_i\circ P_n- \pi_i|\, d\mu_{\partial T} + \\
    & \qquad \Lip\, f \sum_{i=1}^k\int_E |\pi_i\circ P_n- \pi_i|\, d\mu_T +\\
    &\qquad
    |T(f\circ P_n\, d\pi)-T(f\,d\pi)| \qquad \mbox{ by proposition~5.1 of~\cite{AmbrKirch00}}\\
    &
    \leq (\|f\|_\infty  +\Lip\, f ) k \int_E  \|P_n x-  x\|\, d(\mu_{\partial T} +\mu_T) +\\
    &\qquad
    |T(f\circ P_n\, d\pi)-T(f\,d\pi)|,
\end{align*}
all the terms in the right-hand side tending to zero as $n\to \infty$ by the choice of $P_n$ (the first one by Lebesgue
theorem, recalling that $\|P_n x-  x\|\leq 2\|x\|$ and the support of $T$, and hence of $\partial T$, is bounded, while the last term
because $f(P_n(x))\to f(x)$ for $\mu_T$-a.e. $x\in E$).

Further, we have $\MM(T_n)\leq \MM(T)$ which together with lower semicontinuity of the mass  with respect to weak convergence
gives $\MM(T_n)\to \MM(T)$, and the latter implies $\mu_{T_n}\rightharpoonup \mu_T$
in the narrow sense of measures
as $n\to \infty$.
In the same way one shows that
$\mu_{\partial T_n}\rightharpoonup \mu_{\partial T}$.

For the general case of a current $T$ with possibly unbounded support, we approximate
$T$ by a sequence $\{T_\nu\}\subset \M_k(E)$, such that each $T_\nu$ has bounded support and
$\MM(T_\nu-T)+\MM(\partial T_\nu -\partial T)\to 0$ as $\nu\to \infty$
(for this purpose just take $T_\nu := T\res g_\nu$ for a $g_\nu\in \Lip_1(E)$ with bounded support having
$0\leq g_\nu \leq 1$ and
$g_\nu=1$ on $B_\nu (0)$). Approximating now each $T_\nu$ by the currents $T_\nu^n$ as above, and choosing a diagonal
subsequence provided by Lemma~\ref{lm_currdiag1}, we get the result.
\end{proof}

\begin{lemma}\label{lm_cycl1_massfin1}
Let $E$ be a finite-dimensional normed space
endowed with the norm $\|\cdot\|$,
and
$T\in \M_1(E)$.
Then
\begin{equation}\label{eq_reprcurr1}
T(f\,d\pi)=\int_{E} f(x) (\nabla\pi(x),l(x))\, d\mu_T(x),
\end{equation}
when $\pi\in C^1(E)$,
for some Borel measurable vector field $l\colon E\to E$ satisfying $\|l(x)\|=1$
for $\mu_T$-a.e. $x\in \E$, where $(\cdot,\cdot)$ stands for the scalar product of vectors.
\end{lemma}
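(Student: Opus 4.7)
The plan is to express $T$ through its action on the coordinate $1$-forms of $E \cong \R^n$, then recover the formula for general $C^1$ functions via the chain rule for metric currents, and finally identify the norm of $l$ using the minimality property of the mass measure $\mu_T$.

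First, let $\pi_i(x) := x_i$ for $i=1,\ldots,n$ be the coordinate projections, which belong to $\Lip(E)$. By the defining inequality of the mass measure, $|T(f\,d\pi_i)| \leq \Lip(\pi_i)\int_E |f|\,d\mu_T$ for every $f \in \Lip_b(E)$, so each map $f \mapsto T(f\,d\pi_i)$ extends uniquely to a continuous linear functional on $L^1(\mu_T)$. Radon--Nikodym yields $l_i \in L^\infty(\mu_T)$ with $T(f\,d\pi_i) = \int_E f\,l_i\,d\mu_T$; set $l := (l_1,\ldots,l_n)\colon E \to E$.

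Next, for arbitrary $\pi \in C^1(E)$, write $\pi = \pi \circ (\pi_1,\ldots,\pi_n)$ and apply the chain rule for metric $1$-currents from~\cite{AmbrKirch00} to get
$$T(f\,d\pi) = \sum_{i=1}^n T\bigl(f\,\partial_i\pi\,d\pi_i\bigr) = \int_E f(x)\bigl(\nabla\pi(x), l(x)\bigr)\,d\mu_T(x).$$
A preliminary Lipschitz cut-off (together with the continuity of $T$ on its forms) handles the case where $\nabla\pi$ is only locally bounded.

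It remains to verify $\|l(x)\|=1$ for $\mu_T$-a.e.\ $x$. For the upper bound, take $\pi$ linear with $\nabla\pi = v$ constant, so that $\Lip(\pi)=\|v\|_*$ (the dual norm), and apply the mass inequality with $f=\mathbf 1_B$ (approximated by Lipschitz functions): for every Borel $B \subset E$ and every $v \in \R^n$,
$$\int_B (v, l(x))\,d\mu_T(x) \leq \|v\|_*\,\mu_T(B).$$
Letting $v$ range over a countable dense subset of $\R^n$ and intersecting the resulting full-measure sets, one obtains $(v,l(x)) \leq \|v\|_*$ for $\mu_T$-a.e.\ $x$ and all $v$, hence $\|l(x)\|\leq 1$ $\mu_T$-a.e.\ by duality. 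For the lower bound, the representation gives $|T(f\,d\pi)| \leq \Lip(\pi)\int_E |f|\,d(\|l\|\mu_T)$, so that $\|l\|\mu_T$ also witnesses the mass inequality; the minimality characterization of $\mu_T$ then forces $\mu_T \leq \|l\|\mu_T$, i.e.\ $\|l(x)\|\geq 1$ $\mu_T$-a.e. The main subtle point is the invocation of the metric chain rule in passing from the Lipschitz coordinates $\pi_i$ to arbitrary $C^1$ functions; the remainder consists of routine Radon--Nikodym, duality and minimality arguments.
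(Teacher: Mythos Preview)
Your argument is correct and takes a genuinely different route from the paper. The paper obtains the representation~\eqref{eq_reprcurr1} by invoking an external result (theorem~1.3 of~\cite{Williams10}), first for $\mu_T\ll\mathcal L^n$ and then by approximation; you instead build $l$ directly by applying Radon--Nikodym to the coordinate functionals $f\mapsto T(f\,d\pi_i)$ and then the Ambrosio--Kirchheim chain rule, which is more self-contained. For the upper bound $\|l\|\le 1$, the paper approximates a Borel dual selector $a(x)$ of $l(x)$ by simple functions and tests against finitely many linear $\pi_i$, whereas your argument---testing against a countable dense family of linear forms and intersecting the resulting full-measure sets---is shorter and avoids the measurable-selection step. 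Both proofs use the same minimality argument for the lower bound $\|l\|\ge 1$. One small point worth making explicit: to conclude that $\|l\|\mu_T$ dominates $\mu_T$ you need the inequality $|T(f\,d\pi)|\le \Lip(\pi)\int|f|\,\|l\|\,d\mu_T$ for \emph{all} Lipschitz $\pi$, not only $C^1$ ones; this follows by mollifying $\pi$ (which preserves the Lipschitz constant) and using continuity of $T$, but it is a step the paper also leaves implicit. The cut-off remark is unnecessary, since $\pi\in D^1(E)$ already forces $\pi\in\Lip(E)$ and hence $\nabla\pi$ bounded.
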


\begin{proof}
The representation of $T$ in the form~\eqref{eq_reprcurr1}
with $l\in L^\infty(E;\mu_T)$
is due to theorem~1.3 from~\cite{Williams10}
when $\mu_T\ll \mathcal{L}^n$; the
general case follows by approximating $T$ by a sequence of
$T_k\in \M_1(E)$ with
 $T_k\weakto T$, $\mu_{T_k}\weakto \mu_T$ as $k\to +\infty$, and $\mu_{T_k}\ll \mathcal{L}^n$ for all $k\in \N$.
Further, minding that $\|\nabla\pi(x)\|'\leq \Lip\, \pi $ for all $x\in E$,
where $\|\cdot\|'$ stands for the norm in the space $E'$ dual to $E$, the representation~\eqref{eq_reprcurr1} implies
\[
|T(f\,d\pi)|\leq \int_{E} f(x) \|\nabla\pi(x)\|'\cdot \|l(x)\|\, d\mu_T(x) \leq \Lip\pi \int_{E} f(x) \|l(x)\|\, d\mu_T(x),
\]
so that, by the definition of the mass measure of a metric current  one has
$\mu_T\leq \|l\|\mu_T$. This implies $\|l(x)\|\geq 1$
for $\mu_T$-a.e. $x\in \E$.
To prove the opposite inequality, let
$a\colon \R^n\to \R^n$ be a Borel measurable vector field with
$\|a(x)\|'=1$
such that $(a(x),l(x))=\|l(x)\|$
(such a vector field exists, say, in view of corollary~A.2.1 of~\cite{Vaeth97}).
Denote for the sake of brevity $\mu:=\|l\|\mu_T$.
For a given $\varepsilon>0$, we 
choose a finite $\delta$-net $\{c_i\}_{i=1}^k$ of
the unit sphere $\{\|x\|'=1\}$, where $\delta=\varepsilon/\mu(E)$, and set
\begin{align*}
E_i& :=\{\|a(x)-c_i\|'\leq \delta\}\\
D_1& :=E_1, \qquad D_i:= E_i\setminus \cup_{j=1}^{i-1} D_i,
\end{align*}
so that for $a_\varepsilon :=\sum_{i=1}^k \mathbf{1}_{D_i} c_i$
one has
\begin{align*}
\int_E \left\|a(x)-a_\varepsilon(x)\right\|\, d\mu
 &=
\sum_{i=1}^k\int_{D_i} \left\|a(x)-c_i\right\|\, d\mu \\
& \leq \delta \sum_{i=1}^k\mu(D_i)= \delta\mu(E)\leq \varepsilon.
\end{align*}
Letting $\pi_i\colon E\to \R$ be a Lipschitz function with
$\Lip\pi_i=1$ and $\nabla \pi_i= c_i$,
one gets
\begin{align*}
\mu_T(\{l>1+\alpha\}) &\geq \sum_{i=1}^k T\left(\mathbf{1}_{\{l>1+\alpha\}}\mathbf{1}_{D_i}\,d\pi_i\right) \\
& =
\int_{\{l>1+\alpha\}}
 (a_\varepsilon,l(x))\, d\mu_T(x) \\
 &\geq \int_{\{l>1+\alpha\}}
 (a,l(x))\, d\mu_T(x) - \int_{E}
 \|a-a_\varepsilon\|'\cdot \|l(x)\|\, d\mu_T(x) \\
 &\geq (1+\alpha)\mu_T(\{l>1+\alpha\})-\varepsilon.
\end{align*}
Sending $\varepsilon\to 0^+$, we get
 $\mu_T(\{l>1+\alpha\})\geq (1+\alpha) \mu_T(\{l>1+\alpha\})$, which can be only true when
 $ \mu_T(\{l>1+\alpha\})=0$. Since $\alpha>0$ can be taken arbitrary, we get $\|l\|\leq 1$ which concludes the proof.
\end{proof}

Now we consider another construction which is used in the paper.
 Let $(E_i,d_i)$ be metric spaces, $i=1,2$,
 $T_1\in \M_1(E_1)$ and  $\mu_2\in \M_0(E_2)$.
 We define the current $T_1\times \mu_2\in \M_1(E_1\times E_2)$ by setting
\begin{equation}\label{eq_prodcurr1}
(T_1\times \mu_2)(\omega):=\int_{E_2} T_1 (\omega(\cdot, x_2))\, d\mu_2(x_2)
\end{equation}
for every $\omega=f\,d\pi\in D^1(E_1\times E_2)$.  Analogously we define
$\mu_1\times T_2\in \M_1(E_1\times E_2)$ for  $T_2\in \M_1(E_2)$ and  $\mu_1\in \M_0(E_1)$.

\begin{lemma}\label{lm_cycl1_Mprod1}
 Let $(E_i,d_i)$ be complete spaces, $T_i\in \mathcal{N}_1(E_i)$, $i=1,2$,
 and
\[
T:=T_1\times \mu_{T_2} +
\mu_{T_1}\times T_2\in \mathcal{N}_1(E_1\times E_2).
\]
Then $\MM(T)=\MM(T_1)\MM(T_2)$, if
the distance $d$ in $E_1\times E_2$ is defined by
\[
d((x_1, x_2),(x_1', x_2')):= d_1(x_1, x_1')\vee d_2(x_2, x_2').
\]
\end{lemma}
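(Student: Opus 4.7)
The plan is to prove the two inequalities $\MM(T)\ge\MM(T_1)\MM(T_2)$ and $\MM(T)\le\MM(T_1)\MM(T_2)$ separately, the first being essentially automatic and the second being the main obstacle.

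For the lower bound I would use the coordinate projections $P_i\colon E_1\times E_2\to E_i$, which are $1$-Lipschitz for $d_\infty$, so $\MM(P_{i\#}T)\le\MM(T)$. A direct computation shows that pulling back a form from $E_1$ via $P_1$ yields a form constant in the second variable (so its $E_2$-differential vanishes), whence $P_{1\#}(\mu_{T_1}\times T_2)=0$ and $P_{1\#}(T_1\times\mu_{T_2})=\mu_{T_2}(E_2)\,T_1=\MM(T_2)\,T_1$. Thus $P_{1\#}T=\MM(T_2)\,T_1$, and $\MM(T)\ge\MM(P_{1\#}T)=\MM(T_1)\MM(T_2)$.

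For the upper bound in the finite-dimensional case (both $E_i$ finite-dimensional normed spaces) I would invoke Lemma~\ref{lm_cycl1_massfin1} to write $T_i(f_i\,d\pi_i)=\int_{E_i} f_i\,(\nabla\pi_i,l_i)\,d\mu_{T_i}$ with $\|l_i\|=1$ $\mu_{T_i}$-almost everywhere. Substituting into the definitions of the two summands of $T$ produces
\[
T(f\,d\pi)=\int_{E_1\times E_2} f(x_1,x_2)\bigl[(\nabla_{x_1}\pi,l_1(x_1))+(\nabla_{x_2}\pi,l_2(x_2))\bigr]\,d(\mu_{T_1}\otimes\mu_{T_2}).
\]
The bracket is the pairing of $\nabla\pi\in(E_1\times E_2)^*$ with $(l_1(x_1),l_2(x_2))\in E_1\times E_2$. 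Since the dual of the $d_\infty$-product norm is the sum of the individual dual norms, one has $\|\nabla\pi(x_1,x_2)\|^*\le\Lip\pi$, while $\|(l_1(x_1),l_2(x_2))\|_\infty=1$. Hence the bracket is bounded in absolute value by $\Lip\pi$, yielding $|T(f\,d\pi)|\le\Lip\pi\int|f|\,d(\mu_{T_1}\otimes\mu_{T_2})$ and $\MM(T)\le\MM(T_1)\MM(T_2)$. The subtle point here, which I expect to be the main obstacle, is that bounding the two terms defining $T$ separately loses a factor of two; recovering the sharp bound crucially relies on the additivity of dual norms for the $d_\infty$-product, which permits the algebraic cancellation just used.

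For general complete metric spaces, I would isometrically embed each $E_i$ into a Banach space with the metric approximation property, apply Lemma~\ref{lm_currapprMAP1} to obtain finite-dimensional approximations $T_i^n$ with $T_i^n\weakto T_i$ weakly and $\mu_{T_i^n}\weakto\mu_{T_i}$ narrowly, and form the mixed products $T^n:=T_1^n\times\mu_{T_2^n}+\mu_{T_1^n}\times T_2^n$ on the finite-dimensional products. The preceding step yields $\MM(T^n)=\MM(T_1^n)\MM(T_2^n)\to\MM(T_1)\MM(T_2)$. The weak convergence $T^n\weakto T$ is then obtained by a joint limit passage in the spirit of Lemma~\ref{lm_currdiag1}, after which lower semicontinuity of mass delivers $\MM(T)\le\liminf_n\MM(T^n)=\MM(T_1)\MM(T_2)$, closing the argument.
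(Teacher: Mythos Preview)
Your proposal is correct and follows essentially the same three-step route as the paper: the projection lower bound, the finite-dimensional upper bound via the vector-field representation of Lemma~\ref{lm_cycl1_massfin1} combined with the max-norm/sum-dual-norm duality, and the reduction of the general case by isometric embedding into a Banach space with MAP and finite-dimensional approximation through Lemma~\ref{lm_currapprMAP1}. One small imprecision: the weak convergence $T^n\weakto T$ is not ``in the spirit of Lemma~\ref{lm_currdiag1}'' (that lemma handles diagonal extraction from a doubly-indexed family), but is established directly in the paper by splitting $\int_{E_2}T_1^n(\omega(\cdot,x_2))\,d\mu_{T_2^n}$ into $\int(T_1^n-T_1)(\omega(\cdot,x_2))\,d\mu_{T_2^n}$ (which vanishes by dominated convergence) and $\int T_1(\omega(\cdot,x_2))\,d\mu_{T_2^n}$ (which converges by narrow convergence, the integrand being bounded and continuous in $x_2$).
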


\begin{proof}
Let us first observe that it is enough to show
\begin{equation}\label{eq_cycl1_Mprod1main}
\MM(T)\leq\MM(T_1)\MM(T_2).
\end{equation}
In fact, denoting by $P_1\colon E_1\times E_2\to E_1$ the projection
map $P_1(x_1, x_2):= x_1$, we have
\[
P_{1\#} (T_1\times \mu_{T_2}) = \MM(T_2) T_1, \qquad P_{1\#} (\mu_{T_1}\times T_2)=0,
\]
so that
\[
\MM(T)\geq \MM(P_{1\#} T)= \MM(T_1)\MM(T_2).
\]
We divide the proof of the remaining claim~\eqref{eq_cycl1_Mprod1main} in three steps.

{\em Step 1}. Consider the case when $E_i$ are finite-dimensional normed spaces with norms $\|\cdot\|_i$.
Then $E_1\times E_2$ is equipped with the norm $\|(x_1,x_2)\|:=\|x_1\|_1\vee \|x_2\|_2$.
By Lemma~\ref{lm_cycl1_massfin1} we may assume
\[
T_i(f_i\,d\pi_i)=\int_{E_i} f_i(x_i) (\nabla\pi_i(x_i),l_i)\, d\mu_{T_i}(x_i), \qquad
\|l_i\|_{E_i}=1,
\]
for every $f_i\,d\pi_i\in D^1(E_i)$. Then, for $f\,d\pi\in D^1(E_1\times E_2)$ one has
\begin{align*}
T(f\,d\pi) &=\int_{E_2}\left(\int_{E_1} f(x_1, x_2) (\nabla\pi(x_1, x_2),l_1)\, d\mu_{T_1}(x_1)\right) d\mu_{T_2}(x_2)\\
& \quad + \int_{E_1}\left(\int_{E_2} f(x_1, x_2) (\nabla\pi(x_1, x_2),l_2)\, d\mu_{T_2}(x_2)\right) d\mu_{T_1}(x_1)\\
& = \int_{E_1\times E_2}f(x_1, x_2) (\nabla\pi(x_1, x_2),l)\, d\mu_{T_1}\otimes\mu_{T_2}(x_1,x_2),
\end{align*}
where $l:= (l_1,0)+ (0, l_2)=(l_1,l_2)\in E_1\times E_2$. Since $\|l\|=1$, we have by Lemma~\ref{lm_cycl1_massfin1}
that
$\mu_T=\mu_{T_1}\otimes \mu_{T_2}$, so that in particular
$\MM(T)=\MM(T_1)\MM(T_2)$.

{\em Step 2}.
We now show this result for the case when both $E_i$ are Banach spaces
with metric approximation property.
Let $T_i^n\in \M_1(E_i)$ be normal currents supported over some
finite-dimensional subspaces of $E_i$ such that $T^n_i\weakto T_i$ in the weak sense of currents, while
$\mu_{T_n^i}\weakto \mu_{T_i}$
 as $n\to \infty$ (such sequences of currents exist  due to Lemma~\ref{lm_currapprMAP1}).
We claim that for
\[
T^n:=T_1^n\times \mu_{T_2^n} +
\mu_{T_1^n}\times T_2^n\in \mathcal{N}_1(E_1\times E_2)
\]
one has $T^n\weakto T$ in the weak sense of currents
 as $n\to \infty$. This would complete the proof of this step since
 then
 \begin{align*}
 \MM(T) &\leq\liminf_n \MM(T^n) \\
 &= \liminf_n \MM(T^n_1) \MM(T^n_2) \qquad\qquad\qquad\qquad\mbox{(by Step 1)}\\
 & =\lim_n \MM(T^n_1) \lim_n \MM(T^n_2)= \MM(T_1) \MM(T_2).
 \end{align*}

 To show the claim consider an arbitrary $\omega=f\,d\pi\in D^1(E_1\times E_2)$.
 One has
 \begin{equation}\label{eq_cycl1T1T2a}
 \begin{aligned}
\int_{E_2} T_1^n (\omega(\cdot, x_2))\, d\mu_{T_2^n}(x_2) &= \int_{E_2} (T_1^n-T_1) (\omega(\cdot, x_2))\, d\mu_{T_2^n}(x_2)\\
&\qquad + \int_{E_2} T_1 (\omega(\cdot, x_2))\, d\mu_{T_2^n}(x_2).
 \end{aligned}
 \end{equation}
 But $(T_1^n-T_1) (\omega(\cdot, x_2))\to 0$ since $T_1^n\to T_1$, and  minding that
 \[
 |(T_1^n-T_1) (\omega(\cdot, x_2))|\leq (\MM(T_1^n)+\MM(T_1))\|f\|_\infty\Lip\pi\leq
 3\MM(T_1)\|f\|_\infty\Lip\pi
 \]
 when $n$ is sufficiently large, we get by means of Lebesgue dominated convergence theorem
 \[
 \int_{E_2} (T_1^n-T_1) (\omega(\cdot, x_2))\, d\mu_{T_2^n}(x_2)\to 0
 \]
 as $n\to +\infty$. On the other hand, the map
 $x_2\subset E_2\mapsto T_1 (\omega(\cdot, x_2))$ is bounded by $\MM(T_1)\|f\|_\infty\Lip\,\pi$ and
 continuous by the basic properties of currents, because $x_2^k\to x_2$ in $E_2$ implies
 $f(\cdot,x_2^k)\to f(\cdot,x_2)$, pointwise, and hence also in $\mu_{T_1}$ (because
 $\|f(\cdot, x_2^k)\|_\infty\leq \|f\|_\infty$,  and $\pi(\cdot,x_2^k)\to \pi(\cdot,x_2)$, pointwise with
 $\Lip\,\pi (\cdot, x_2^k)\leq \Lip\,\pi$). Therefore,
 \[
\int_{E_2} T_1 (\omega(\cdot, x_2))\, d\mu_{T_2^n}(x_2)\to
\int_{E_2} T_1 (\omega(\cdot, x_2))\, d\mu_{T_2}(x_2),
 \]

 since
 $\mu_{T_n^2}\weakto \mu_{T_2}$
 as $n\to \infty$.
 Thus, from~\eqref{eq_cycl1T1T2a} we get
 \[
 \int_{E_2} T_1^n (\omega(\cdot, x_2))\, d\mu_{T_2^n}(x_2) \to \int_{E_2} T_1 (\omega(\cdot, x_2))\, d\mu_{T_2}(x_2).
 \]
 Analogously we obtain
 \[
 \int_{E_1} T_2^n (\omega(x_1,\cdot))\, d\mu_{T_1^n}(x_1) \to \int_{E_1} T_2 (\omega(x_1,\cdot))\, d\mu_{T_1}(x_2),
 \]
 and hence the claim.

 {\em Step 3}.
 In view of lemma~5.5 from~\cite{PaoSte11-acycl} and of the previous step of the proof the result is proven
 in the case $E_1=E_2=\ell^\infty$.
 If $E_i$ are arbitrary complete metric spaces,
 we may assume without loss of generality that they be Polish (otherwise just
 take $\supp T_i$ in place of $E_i$).
 Denoting then by $j_i\colon E_i\to \ell^\infty$ the isometric imbeddings, and minding that
 $\mu_{j_{i\#}T_i} = j_{i\#}\mu_{T_i}$, we get that
 \[
 j_{1\#}T_1\times \mu_{j_{2\#}T_2} + \mu_{j_{1\#}T_1}\times j_{2\#}T_2= (j_1,j_2)_{\#} T.
 \]
 But then $\MM((j_1,j_2)_{\#} T)\leq \MM (j_{1\#}T_1)\MM(j_{2\#}T_2)=\MM (T_1)\MM(T_2)$, but since the map
 $(j_1,j_2)\colon E_1\times E_2\to \ell^\infty\times \ell^\infty$ is an isometry, then
 $\MM (T)=\MM((j_1,j_2)_{\#} T)$, and the proof is completed.
 \end{proof}

\section{Auxiliary lemmata from probability theory}

Here we collect some more or less folkloric statements (or something ``around'' mathematical folklore) from
abstract probability theory which are used in the paper.
We start with the following compactification result which is a variation on the theme of lemma~3.1.4 from~\cite{Stroock00}.

\begin{lemma}\label{lm_cycl1_sepremetr1}
Let $(E,d)$ be a separable metric space. Then there is a new distance $\tilde d\leq d$ over $E$ topologically equivalent to
$d$ such that $(E,\tilde d)$ is totally bounded. In particular,
denoting by $\tilde E$ the completion of $E$ with respect to $\tilde d$ we have
that $\tilde E$ is compact, while
the space $C(\tilde E)=C_b(\tilde E)$ is separable. Thus,
letting $C_u(E,\tilde d)$ to stand for the set of bounded functions
uniformly continuous over $E$ with respect to $\tilde d$, we get
the existence of a countable set $\{f_k\}\subset C_u(E,\tilde d)$ dense
in  $C_u(E,\tilde d)$ in the uniform norm $\|\cdot\|_\infty$.
\end{lemma}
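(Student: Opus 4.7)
The plan is to construct $\tilde d$ by pulling back the natural product metric on $[0,1]^{\N}$ through the family of truncated distance functions to a countable dense sequence $\{x_n\}\subset E$ (which exists by separability). Concretely I would set
\[
\tilde d(x,y):=\sum_{n=1}^{\infty} 2^{-n}\bigl(|d(x,x_n)-d(y,x_n)|\wedge 1\bigr).
\]
The first routine verifications are that this is a genuine metric (symmetry and the triangle inequality are immediate, while point-separation uses the density of $\{x_n\}$), and that $\tilde d\le d$ (from $|d(x,x_n)-d(y,x_n)|\le d(x,y)$ together with $\sum_n 2^{-n}=1$).

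Topological equivalence of $d$ and $\tilde d$ splits in two. The implication $d$-convergence $\Rightarrow$ $\tilde d$-convergence is immediate from $\tilde d\le d$. For the converse, given $x\in E$ and $\varepsilon\in(0,1)$, I would pick $n$ with $d(x,x_n)<\varepsilon/3$; then $\tilde d(x,y)<2^{-n}\varepsilon/3$ forces $|d(x,x_n)-d(y,x_n)|<\varepsilon/3$, and the triangle inequality yields $d(x,y)<\varepsilon$. For total boundedness of $(E,\tilde d)$, I would observe that the map $\Phi(x):=(d(x,x_n)\wedge 1)_n$ embeds $(E,\tilde d)$ \emph{isometrically} into the compact metric space $([0,1]^{\N},\rho)$ with $\rho(a,b):=\sum_n 2^{-n}|a_n-b_n|$. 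Hence $(E,\tilde d)$ sits as a subset of a compact space and is therefore totally bounded, and its completion $\tilde E$ is compact (in fact isometric to $\overline{\Phi(E)}\subset [0,1]^{\N}$).

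The ``in particular'' part is then essentially bookkeeping. Compactness of $\tilde E$ gives $C(\tilde E)=C_b(\tilde E)$, and separability of $C(\tilde E)$ in the uniform norm is the classical Stone--Weierstrass consequence for compact metric spaces (one may take the countable unital subalgebra generated by the coordinate projections $y\mapsto y_n$ with rational polynomial coefficients). Finally, the restriction map $C(\tilde E)\to C_u(E,\tilde d)$ is a linear isometry for $\|\cdot\|_\infty$: every $f\in C_u(E,\tilde d)$ extends uniquely to a continuous function on the completion $\tilde E$ by the standard extension theorem for uniformly continuous maps, and every $g\in C(\tilde E)$ restricts to an element of $C_u(E,\tilde d)$. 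Hence the restrictions of any countable dense subset of $C(\tilde E)$ yield the desired countable dense family $\{f_k\}\subset C_u(E,\tilde d)$.

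I do not foresee any real obstacle: the result is classical and the only care required is in the bookkeeping with the truncation $\wedge 1$ and the geometric weights $2^{-n}$, together with invoking the standard extension-to-the-completion theorem for uniformly continuous maps and Stone--Weierstrass for separability of $C(K)$ on a compact metric $K$.
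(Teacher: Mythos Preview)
There is a genuine gap: your metric $\tilde d(x,y)=\sum_n 2^{-n}\bigl(|d(x,x_n)-d(y,x_n)|\wedge 1\bigr)$ is \emph{not} totally bounded in general, and your map $\Phi(x)=(d(x,x_n)\wedge 1)_n$ is \emph{not} an isometry from $(E,\tilde d)$ into $([0,1]^\N,\rho)$. The point is that you truncate the \emph{differences} $|d(x,x_n)-d(y,x_n)|$, whereas $\Phi$ truncates the \emph{values} $d(x,x_n)$; these do not match (e.g.\ $a=2$, $b=1/2$ gives $|a\wedge 1-b\wedge 1|=1/2$ but $|a-b|\wedge 1=1$). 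Concretely, take $E=\R$ with the usual metric and $\{x_n\}$ an enumeration of $\Q$. For any $N$, set $M=\max_{n\le N}|x_n|$; then for integers $k,l>M$ with $k\ne l$ one has $\bigl||k-x_n|-|l-x_n|\bigr|=|k-l|\ge 1$ for every $n\le N$, hence $\tilde d(k,l)\ge\sum_{n=1}^N 2^{-n}=1-2^{-N}$. Thus no subsequence of $(k)_{k\in\N}$ is $\tilde d$-Cauchy, so $(E,\tilde d)$ is not totally bounded.

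The fix is to truncate the coordinates before taking differences, i.e.\ set
\[
\tilde d(x,y)=\sum_{n} 2^{-n}\,\bigl|\,d(x,x_n)\wedge 1 - d(y,x_n)\wedge 1\,\bigr|,
\]
or, as the paper does, use $g_n(x)=d(x,x_n)/(1+d(x,x_n))$ and $\tilde d(x,y)=\sum_n 2^{-n}|g_n(x)-g_n(y)|$. With either choice your map into $[0,1]^\N$ is genuinely an isometry, so total boundedness and compactness of the completion follow; the remaining verifications ($\tilde d\le d$, topological equivalence, and the $C_u$/$C(\tilde E)$ identification) go through exactly as you wrote.
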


\begin{proof}
Let $\{x_k\}\subset E$ stand for a countable dense set in $E$, and consider the map
$g\colon E\to [0,1]^\N$
defined by
\[
g_n(x):= \frac{d(x,x_n)}{1+d(x,x_n)}
\]
for all $x\in E$. Note that $[0,1]^\N$ is compact when equipped with the product topology, while
the latter may be metrized, say, by the distance
\[
\hat d(x,y):= \sum_{k=1}^\infty \frac{|x_k-y_k|}{2^k}.
\]
Thus, defining
\[
\tilde d(x,y):= \hat d(g(x),g(y))
\]
for all $\{x,y\}\subset E\times E$, we get that $(E,\tilde d)$ is totally bounded.

Now, clearly,
\begin{align*}
\tilde d(x,y) & = \sum_{k=1}^\infty \frac 1 {2^k} \left|\frac{d(x,x_k)}{1+d(x,x_k)} - \frac{d(y,x_k)}{1+d(y,x_k)} \right|\\
& \leq  \sum_{k=1}^\infty \frac 1 {2^k} \left|d(x,x_k) - d(y,x_k) \right| \leq  \sum_{k=1}^\infty \frac 1 {2^k} d(x,y) = d(x,y).
\end{align*}
Vice versa,
 $\tilde d(y_j,y)\to 0$ for some $y\in E$ 
 implies
$g_n(y_j)\to g_n(y)$, and hence
$d(y_j,x_n)\to d(y,x_n)$ for all $n\in \N$ as $j\to \infty$. Then
\[
d(y_j,y)\leq d(y_j,x_n) +d(y,x_n) \to 2 d(y,x_n),
\]
and hence, since $x_n$ is an arbitrary element of a dense set in $E$,
we get $d(y_j,y)\to 0$
as $j\to \infty$.

Hence, denoting by $\tilde E$ the completion of $E$ with respect to $\tilde d$ we have
that $\tilde E$ is compact, and therefore
the space $C(\tilde E)=C_b(\tilde E)$ is separable. Further, every $f\in C_b(\tilde E)$ is clearly uniformly continuous.
Vice versa, if $f\in C_u(E,\tilde d)$, then for every fundamental sequence
$\{y_j\}\subset E$ one has that $\{f(y_j)\}\subset \R$ is fundamental, and hence $f$ can be extended by continuity
to a function from $C(\tilde E)$. Thus we may identify $C_u(E,\tilde d)$ with $C(\tilde E)$, so that the last claim of
the lemma being proven is just separability of $C(\tilde E)$.
\end{proof}

\begin{remark}\label{rm_cycl1_comp1}
For the case of a Euclidean space $E:=\R^n$ (or, more generally, for a space with Heine-Borel property, i.e.\ a space
where closed balls are compact) the above Lemma~\ref{lm_cycl1_sepremetr1} gives just the ordinary Alexandrov
one-point compactification $\tilde{E}$. In fact, if a sequence $\{y_k\}\subset E$ is fundamental with respect to the new distance
$\tilde d$, then so is the sequence
$\{d(y_k, x_n)/(1+d(y_k, x_n))\}\subset \R$ for each $n\in \N$. Then either of the following two separate cases may happen.
\begin{itemize}
\item[(i)] $d(y_k, x_n)/(1+d(y_k, x_n))\to 1$, which means $d(y_k, x_n)\to \infty$ for some $n\in \N$, which happens if and only if $y_k\to \infty$ (i.e.\ $d(y_k,y)\to \infty$ for all $y\in E$) as $k\to \infty$. This is the case when the $\{y_k\}$ determines the point $\infty\in \tilde E$.
\item[(ii)] The sequence $\{y_k\}$ is uniformly bounded (note that the case of $y_k\to \infty$ as $k\to \infty$ only for a subsequence of $\{y_k\}$ is excluded since otherwise  one would have $d(y_k, x_n)/(1+d(y_k, x_n))\to 1$ along this subsequence, and hence for the whole sequence, since the latter sequence of numbers is fundamental). Then up to a subsequence (not relabeled) $y_k\to y\in E$, hence
    $d(y_k,x_n)\to d(y, x_n)$, and therefore also $d(y_k, x_n)/(1+d(y_k, x_n))\to d(y, x_n)/(1+d(y, x_n))$
     for all $n\in \N$
     as $k\to \infty$. Again, the latter convergence must be now valid for the whole original sequence, which means that the same must be true also for convergence
     $d(y_k,x_n)\to d(y, x_n)$.
     Now, for any other convergent subsequence of $\{y_k\}$ (again not relabeled), say,
     $y_k\to z\in E$, one would have $d(y_k,x_n)\to d(z, x_n)$, which implies $d(y, x_n)=d(z, x_n)$ for all $n\in \N$.
     This means $y=z$ and hence  the whole sequence $\{y_k\}$ is convergent to $y\in E$.
\end{itemize}
Summing up, we have $\tilde E= E\cup \{\infty\}$.
Therefore, $C_u(E,\tilde d)$ consists of continuous (with respect to $d$) functions having (finite) limits at infinity.
\end{remark}

\begin{remark}\label{rm_cycl1_comp2}
In the case of a Euclidean space $E:=\R^n$ for every compact $K\subset E$
there is a $C>0$ such that
\[
d(y,z)\leq C \tilde d(y,z) \mbox{ for all } (y,z)\in K\times K.
\]
To show this suppose the contrary, i.e.\ the existence of $\{(y_k,z_k)\}\subset K\times K$ such that
\[
\lim_k \frac{\tilde d(y_k,z_k)}{d(y_k,z_k)}\to 0,
\]
and thus
\[
\lim_k \left|
\frac{d(y_k,x_n)}{1+ d(y_k,x_n)} - \frac{d(z_k,x_n)}{1+ d(z_k,x_n)}
\right| \frac{1}{d(y_k,z_k)}
= 0,
\]
for all $n\in \N$,
which is only possible when
\begin{equation}\label{eq_cycl1_xnyn1}
\lim_k
\frac{|d(y_k,x_n)-d(z_k,x_n)|}{d(y_k,z_k)} = 0,
\end{equation}
for all $n\in \N$. Since by compactness of $K$ we may assume without loss of generality that $z_k\to z$ and $y_k\to y$ as $k\to \infty$, then the above equality is only possible once
and in particular $y=z$. But
for $x_n\neq y$ the relationship
\[
\frac{d(y_k,x_n)-d(z_k,x_n)}{d(y_k,z_k)} = \frac{y-x_n}{|y-x_n|} \cdot \frac{y_k-z_k}{|y_k-z_k|} + o(1)
\]
for $k\to \infty$ holds.
Since up to a subsequence (not relabeled) $(y_k-z_k)/|y_k-z_k|\to e$ as $k\to \infty$ for some unit vector $e$, then choosing an
$n\in \N$ such that
\[
\frac{y-x_n}{|y-x_n|} \cdot e >0,
\]
we get a contradiction with~\eqref{eq_cycl1_xnyn1}.
\end{remark}

We also use in the paper the following easy consequence of corollary~7.7.2 from~\cite{Bogachev06}
(i.e.\ of the Kolmogorov extension theorem).

\begin{lemma}\label{lm_cycl1_probext1}
Let $(X,\Sigma)$ be a measure space ($X$ being a metric space and $\Sigma$ being its Borel $\sigma$-algebra) and $\eta_k$ be
(Borel) tight probability measures over $X^k$ satisfying the following compatibility conditions:
\begin{align*}
    \pi_{k-1\#} \eta_k & =\eta_{k-1},\\
    \pi^{k-1}_{\#} \eta_k & =\eta_{k-1},
\end{align*}
where $\pi_{k-1}\colon X^k= X^{k-1}\times X\to X^{k-1}$ and
$\pi^{k-1}\colon X^k= X\times X^{k-1}\to X^{k-1}$ are defined by
\begin{align*}
    \pi_{k-1} (x_1,\ldots, x_{k-1}, x_k) & :=(x_1,\ldots, x_{k-1}),\\
    \pi^{k-1} (x_1, x_2,\ldots,  x_k) & := (x_2,\ldots, x_k).
\end{align*}
Then there is a probability measure $\eta_*$ over $X^\Z$ such that
\[
\eta_*\left(
\bigcap_{j=k}^l p_j^{-1} (e_j)
\right)= \eta_{l-k}
\left(
\prod_{j=k}^l e_j
\right),\qquad e_j\in \Sigma,
\]
where $p_j\colon X^\Z\to X$ is defined by $p_j(x):=(x)_j$.
\end{lemma}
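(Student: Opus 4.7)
The plan is to apply the Kolmogorov extension theorem (corollary~7.7.2 from~\cite{Bogachev06}) by building a consistent projective system of finite-dimensional marginals indexed by the finite subsets of $\Z$. The hypotheses are carefully tailored to make this routine: tightness of each $\eta_k$ will give tightness of every finite-dimensional marginal, and the two compatibility conditions will give consistency.

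First I would define, for each finite consecutive block $F=\{k,k+1,\ldots,l\}\subset \Z$, a Borel probability measure $\mu_F$ on $X^F$ as the pushforward of $\eta_{l-k+1}$ under the canonical identification $X^{l-k+1}\cong X^F$ that sends the $j$-th factor to coordinate $k+j-1$. Next I would check that if $F\subset F'$ are two consecutive blocks, then the projection of $\mu_{F'}$ onto the $F$-coordinates equals $\mu_F$. This follows by iterating finitely many times the two given compatibility conditions: $\pi_{k-1\#}\eta_k=\eta_{k-1}$ deletes the last coordinate of the block, and $\pi^{k-1}_{\#}\eta_k=\eta_{k-1}$ deletes the first one, so any passage from a longer to a shorter consecutive block is a composition of such elementary deletions.

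Having this, for an arbitrary finite set $F\subset\Z$ I would choose any consecutive block $G\supset F$ and define $\mu_F$ as the marginal of $\mu_G$ on the $F$-coordinates; the previous step ensures independence of the choice of $G$ and full consistency of the family $\{\mu_F\}$ under all coordinate projections. Since each $\eta_k$ is tight, the measure $\mu_F$ is tight on $X^F$ for every finite $F$, so the hypotheses of the Kolmogorov extension theorem (corollary~7.7.2 from~\cite{Bogachev06}) are fulfilled, yielding a Borel probability measure $\eta_*$ on $X^\Z$ whose finite-dimensional marginals are exactly the $\mu_F$. Specializing to consecutive blocks and to product sets immediately gives the displayed formula in the statement.

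The only genuinely delicate point is verifying that the two given compatibility conditions (deletion of the first or of the last coordinate) really do suffice to make the $\mu_F$ consistent under all projections between nested consecutive blocks; this amounts to observing that the two deletion operations commute at the level of $\eta_k$'s, which is a direct consequence of the hypotheses applied twice in either order. Everything else is essentially bookkeeping, with tightness handled automatically by the assumption on each $\eta_k$.
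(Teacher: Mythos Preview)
Your approach is exactly what the paper has in mind: the paper does not give a detailed proof of this lemma, stating only that it is ``an easy consequence of corollary~7.7.2 from~\cite{Bogachev06} (i.e.\ of the Kolmogorov extension theorem)'', and your proposal supplies precisely the routine bookkeeping needed to invoke that result. One minor remark: you correctly use $\eta_{l-k+1}$ for a block of $l-k+1$ coordinates, whereas the displayed formula in the statement writes $\eta_{l-k}$; this is an indexing slip in the paper rather than a flaw in your argument.
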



\end{document}